\setlist[itemize]{leftmargin = *}
\setlist[enumerate]{leftmargin = *}
\theoremstyle{proclaim}
\newtheorem{thm}{Theorem}[section]
\newtheorem{lem}[thm]{Lemma}
\newtheorem{prop}[thm]{Proposition}
\newtheorem*{lem*}{Lemma}
\theoremstyle{statement}
\newtheorem{rem}[thm]{Remark}
\newtheorem{defn}[thm]{Definition}
\newcommand{\C}{\mathbb C}
\newcommand{\R}{\mathbb{R}}
\newcommand{\ad}{\operatorname{Ad}}
\newcommand{\BB}{\mathcal{B}}
\newcommand{\Br}[1]{\mleft( #1 \mright)}
\newcommand{\Co}[1]{\func{C_{0}}{#1}}
\newcommand{\df}{\coloneqq}
\newcommand{\Id}{\operatorname{id}}
\newcommand{\KK}{\mathcal{K}}
\newcommand{\LL}{\mathcal{L}}
\newcommand{\lt}{\textnormal{lt}}
\newcommand{\rt}{\textnormal{rt}}
\newcommand{\xt}{\otimes}
\newcommand{\aut}{\operatorname{Aut}}
\newcommand{\Mod}{\operatorname{Mod}}
\newcommand{\Seq}[2]{\Br{#1}_{#2}}
\newcommand{\Comm}[2]{\SqBr{#1,#2}}
\newcommand{\func}[2]{#1 \Br{#2}}
\newcommand{\Func}[2]{\func{\Br{#1}}{#2}}
\newcommand{\FUNC}[2]{\func{\SqBr{#1}}{#2}}
\newcommand{\Pair}[2]{\Br{#1,#2}}
\newcommand{\Quad}[4]{\Br{#1,#2,#3,#4}}
\newcommand{\SqBr}[1]{\mleft[ #1 \mright]}
\newcommand{\Trip}[3]{\Br{#1,#2,#3}}
\newcommand{\clspn}{\overline{\operatorname{span}}}
\newcommand{\Cstar}[1]{C^{\ast} (#1)}
\newcommand{\Inner}[2]{\mleft\langle #1,#2 \mright\rangle}
\newcommand{\AltSeq}[2]{(#1)_{#2}}
\newcommand{\Altfunc}[2]{#1(#2)}
\newcommand{\AltQuad}[4]{(#1,#2,#3,#4)}
\newcommand{\AltTrip}[3]{(#1,#2,#3)}
\newcommand{\Schwartz}[1]{\func{\mathcal{S}}{#1}}
\newcommand{\lemref}[1]{Lemma \textup{\ref{#1}}}
\newcommand{\secref}[1]{Section \textup{\ref{#1}}}
\newcommand{\propref}[1]{Proposition \textup{\ref{#1}}}
\renewcommand{\H}{\mathcal{H}}
\renewcommand{\L}[1]{\func{L^{2}}{#1}}
\title{The Modular Stone-von Neumann Theorem}
\author[Hall]{Lucas Hall}
\address{School of Mathematical and Statistical Sciences \\ Arizona State University \\ Tempe, Arizona 85287}
\email{lhall10@asu.edu}
\author[Huang]{Leonard Huang}
\address{Department of Mathematics \& Statistics \\ University of Nevada, Reno, Reno, NV 89557}
\email{LeonardHuang@unr.edu}
\author[Quigg]{John Quigg}
\address{School of Mathematical and Statistical Sciences \\ Arizona State University \\ Tempe, Arizona 85287}
\email{quigg@asu.edu}
\subjclass[2000]{Primary 46L55; Secondary 22D25, 22D35, 43A65, 46L06, 81R15, 81S05}
\keywords{Crossed product, action, coaction, $ C^{\ast} $-correspondence, Morita equivalence, nonabelian duality, Stone-von Neumann Theorem}
\begin{document}

\maketitle

\begin{abstract}
In this paper, we use the tools of nonabelian duality to formulate and prove a far-reaching generalization of the Stone-von Neumann Theorem to modular representations of actions and coactions of locally compact groups on elementary $ C^{\ast} $-algebras. This greatly extends the Covariant Stone-von Neumann Theorem for Actions of Abelian Groups recently proven by L. Ismert and the second author. Our approach is based on a new result about Hilbert $ C^{\ast} $-modules that is simple to state yet is widely applicable and can be used to streamline many previous arguments, so it represents an improvement --- in terms of both efficiency and generality --- in a long line of results in this area of mathematical physics that goes back to J. von Neumann's proof of the classical Stone-von Neumann Theorem.
\end{abstract}

\section*{Introduction} 

In a groundbreaking 1925 paper (\cite{Heisenberg}), W. Heisenberg introduced the very first version of quantum mechanics, known as ``matrix mechanics''. According to matrix mechanics, classical observables (continuous real-valued functions on a phase space describing observable properties of classical particles) were to be replaced by quantum observables (self-adjoint unbounded operators on a Hilbert space describing observable properties of quantum particles), and all quantum phenomena could be mathematically explained by the noncommutativity of certain pairs of quantum observables, as expressed by the Heisenberg Commutation Relation (HCR):
$$
    \Comm{A}{B}
\df A B - B A
=   i \hbar \cdot \mathbf{1}.
$$
A well-known example of a pair of quantum observables that satisfies the HCR is $ \Pair{X}{P} $, where $ X $ and $ P $ are the position and momentum operators on the Hilbert space $ \L{\R} $, defined by
\begin{align*}
\Func{X \psi}{x} & \df x \psi(x), \\
\Func{P \psi}{x} & \df - i \hbar \func{\psi'}{x}.
\end{align*}
Note that $ X $ and $ P $ are not bounded operators on $ \L{\R} $. Rather, they are unbounded operators on $ \L{\R} $, where we may take the Schwartz space $ \Schwartz{\R} $ 
(for example)
to be a suitable dense common domain.

In 1926, E. Schr\"{o}dinger published his famous equation that formed the cornerstone of a second version of quantum mechanics, known as ``wave mechanics'' (\cite{Schrodinger1}). As matrix mechanics and wave mechanics made the same experimental predictions, a mathematical equivalence between them was immediately suspected. Schr\"{o}dinger attempted to prove this equivalence in \cite{Schrodinger2}, but his proof failed to meet the standards of mathematical rigor due to its inability to address technical domain issues surrounding unbounded operators.

To overcome the challenges faced by Schr\"{o}dinger, one may take any pair $ \Pair{A}{B} $ of self-adjoint unbounded operators on a Hilbert space $ \H $ that satisfies the HCR and use Stone's Theorem to exponentiate them, thus obtaining (strongly continuous) one-parameter unitary groups $ U $ and $ V $ on $ \H $:
$$
U = \AltSeq{e^{i x A}}{x \in \R}, \qquad
V = \AltSeq{e^{i y B}}{y \in \R}.
$$
It can then be shown that the pair $ \Pair{U}{V} $ satisfies the so-called ``Weyl Commutation Relation (WCR)'' on $ \H $:
$$
\forall x,y \in \R: \quad
V_{y} U_{x} = e^{i x y} U_{x} V_{y}.
$$
The Stone-von Neumann Theorem --- formulated by M. Stone in \cite{Stone} and rigorously demonstrated by J. von Neumann in \cite{vonNeumann} --- states that $ \Pair{U}{V} $ can be completely described in terms of the pair $ \Pair{\lambda}{\Mod} $, where $ \lambda $ and $ \Mod $ are unitary groups on $ \L{\R} $ obtained by exponentiating (via Stone's Theorem) the pair $ \Pair{X}{P} $. These are called, respectively, the ``left regular representation'' and the ``phase modulation'' of $ \R $. The theorem rigorously proves the equivalence between matrix mechanics and wave mechanics, and is a fundamental result in the mathematical theory of nonrelativistic quantum mechanics.

The following is a rather general version of the Stone-von Neumann Theorem.



\begin{thm}[Stone-von Neumann Theorem \cite{vonNeumann}]
Let $ \Pair{U}{V} $ be a pair of unitary representations of $ \R^{n} $ on a Hilbert space $ \mathcal{H} $ that satisfies the WCR, i.e,
$$
\forall x,y \in \R^{n}: \quad
V_{y} U_{x} = e^{i \Inner{x}{y}} U_{x} V_{y}.
$$
Let $ \lambda $ and $ \Mod $ be the unitary representations of $ \R^{n} $ on the Hilbert space $ \L{\R^{n}} $ defined by
$$
\func{\lambda_{x}}{f} \df \func{f}{\cdot - x} \quad \text{and} \quad
\func{\Mod_{y}}{f} \df e^{i \Inner{\cdot}{y}} f.
$$
Then $ \Pair{\lambda}{\Mod} $ satisfies the WCR on $ \L{\R^{n}} $.
Moreover,
there exist a set $ I $ and a unitary map $ W: \H \to \bigoplus_{i \in I} \L{\R^{n}} $ such that for all $ x,y \in \R^{n} $,
$$
W U_{x} W^{- 1} = \bigoplus_{i \in I} \lambda_{x} \quad \text{and} \quad
W V_{y} W^{- 1} = \bigoplus_{i \in I} \Mod_{y}.
$$
\end{thm}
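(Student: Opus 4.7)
The first assertion---that $(\lambda, \Mod)$ satisfies the WCR on $\L{\R^n}$---is a routine check: for $f \in \L{\R^n}$ and $t \in \R^n$, one computes
$$
\Func{\Mod_y \lambda_x f}{t} = e^{i \Inner{t}{y}} \func{f}{t - x}
\quad\text{and}\quad
\Func{\lambda_x \Mod_y f}{t} = e^{i \Inner{t - x}{y}} \func{f}{t - x},
$$
whence $\Mod_y \lambda_x = e^{i \Inner{x}{y}} \lambda_x \Mod_y$. The substantive content is the classification of all such WCR pairs up to unitary equivalence with multiplicity.

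My plan for the classification is to recast the WCR as a covariance condition and then invoke $C^{\ast}$-algebraic duality. I would first integrate $U$ to a nondegenerate representation $\rho \colon \Cstar{\R^n} \to \BB\Br{\H}$; under the Fourier isomorphism $\Cstar{\R^n} \cong \Co{\R^n}$, this yields a nondegenerate representation $\pi \colon \Co{\R^n} \to \BB\Br{\H}$ for which the WCR is equivalent to the covariance relation $V_y \pi(f) V_y^{-1} = \pi(\tau_y f)$, where $\tau$ denotes translation on $\Co{\R^n}$. Reversing these steps recovers a WCR pair $(U, V)$ from any such covariant pair $(\pi, V)$. Thus WCR pairs on $\H$ correspond bijectively to nondegenerate representations of the crossed product $\Co{\R^n} \rtimes_{\tau} \R^n$ on $\H$, with $(\lambda, \Mod)$ corresponding to the canonical Schr\"{o}dinger representation on $\L{\R^n}$.

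The theorem will then follow from the identification $\Co{\R^n} \rtimes_{\tau} \R^n \cong \KK\Br{\L{\R^n}}$ realized through this canonical representation. Granted that identification, every nondegenerate representation of $\KK\Br{\L{\R^n}}$ is unitarily equivalent to a multiple $\bigoplus_{i \in I} \Id$ of the identity representation, and the resulting unitary $W \colon \H \to \bigoplus_{i \in I} \L{\R^n}$ is exactly the one required by the statement. The main obstacle is precisely the crossed-product identification $\Co{\R^n} \rtimes_{\tau} \R^n \cong \KK\Br{\L{\R^n}}$: within the framework this paper develops, it is an instance of Takai duality applied to the trivial $\R^n$-action on $\C$, while a direct proof from scratch amounts to exhibiting a minimal projection in the crossed product (classically, via an explicit Gaussian integral), forcing the algebra to be elementary.
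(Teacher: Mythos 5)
Your proposal is correct, and it is essentially the approach this paper adopts and generalizes: recasting the WCR as covariance for the translation action, passing to the crossed product $\Co{\R^{n}} \rtimes_{\lt} \R^{n} \cong \func{\KK}{\L{\R^{n}}}$ via duality, and using the fact that every nondegenerate representation of the compacts is a multiple of the identity. The paper itself quotes this classical statement from von Neumann without proof, but your argument is precisely the specialization of its proof of the Covariant Stone-von Neumann Theorem for Actions (Imai-Takai duality plus the Abstract Modular Stone-von Neumann Theorem) to the case $A = \C$, $G = \R^{n}$.
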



The Stone-von Neumann Theorem thus completely classifies --- up to unitary equivalence --- all pairs of unitary Hilbert-space representations of $ \R^{n} $ that satisfy the WCR.

In \cite{Mackey}, G. Mackey noted that as $ \R^{n} $ is its own Pontryagin dual, the WCR could be formulated for locally compact abelian groups in general. He then proceeded to formulate and establish the Stone-von Neumann Theorem for the class of all separable locally compact abelian groups, where the separability hypothesis was required by the measure-theoretic machinery of his proof. Shortly after, in \cite{Loomis}, L.H. Loomis was able to remove this superfluous hypothesis with a new proof that did not rely on measure theory. Finally, in \cite{Rieffel1}, M. Rieffel found a mostly algebraic proof containing rudiments of the concept of Morita equivalence for $ C^{\ast} $-algebras, which he went on to develop more fully in \cite{Rieffel2}. It is believed by most operator algebraists that the proper way to view the Mackey-Stone-von Neumann Theorem is as a Morita-equivalence result.

To give an efficient formulation of the Mackey-Stone-von Neumann Theorem, let us define Heisenberg and Schr\"{o}dinger representations.



\begin{defn}[\cites{Mackey,Rieffel1}]
Let $ G $ be a locally compact abelian group. Then a \emph{Heisenberg $ G $-rep\-re\-sen\-ta\-tion} is defined as a triple $ \Trip{\H}{U}{V} $ with the following properties:
\begin{enumerate}
\item
$ \H $ is a Hilbert space.

\item
$ U $ is a unitary representation of $ G $ on $ \H $.

\item
$ V $ is a unitary representation of $ \widehat{G} $ on $ \H $.

\item
$ \Pair{U}{V} $ satisfies the WCR on $ \H $: For all $ x \in G $ and $ \varphi \in \widehat{G} $,
$$
V_{\varphi} U_{x} = \func{\varphi}{x} U_{x} V_{\varphi}.
$$
\end{enumerate}
A Heisenberg $ G $-representation $ \Trip{\H}{U}{V} $ is said to be \emph{equivalent} to another one $ \Trip{\H'}{U'}{V'} $ if and only if there is a unitary map $ W: \H \to \H' $ such that for all $ x \in G $ and $ \varphi \in \widehat{G} $,
$$
W U_{x} W^{- 1}       = U'_{x} \quad \text{and} \quad
W V_{\varphi} W^{- 1} = V'_{\varphi},
$$
in which case we write $ \Trip{\H}{U}{V} \simeq \Trip{\H'}{U'}{V'} $. Note that $ \simeq $ is an equivalence relation on the class of all Heisenberg $ G $-representations.
\end{defn}



\begin{defn}[\cites{Mackey,Rieffel1}] \label{Schroedinger representation}
Let $ G $ be a locally compact abelian group. Then the \emph{Schr\"{o}dinger $ G $-representation} is the triple
$$
\AltTrip{\L{G}}{\lambda}{\Mod},
$$
where $ \lambda $ and $ \Mod $ are, respectively, the unitary representations of $ G $ and $ \widehat{G} $ on the Hilbert space $ \L{G} $ defined by:
$$
\func{\lambda_{x}}{f} \df \Altfunc{f}{x^{- 1} \cdot} \quad \text{and} \quad
\func{\Mod_{\varphi}}{f} \df \varphi f.
$$
\end{defn}


\begin{thm}[\cites{Mackey,Rieffel1}] \label{Schroedinger representations are Heisenberg representations}
Let $ G $ be a locally compact abelian group. Then the following statements are true:
\begin{enumerate}
\item
The Schr\"{o}dinger $ G $-representation is a Heisenberg $ G $-representation.

\item
Let $ I $ be a set and $ \Trip{\H_{i}}{U_{i}}{V_{i}}_{i \in I} $ an $ I $-indexed family of Heisenberg $ G $-rep\-re\-sen\-ta\-tions. Then the direct sum
$$
\bigoplus_{i \in I} \Trip{\H_{i}}{U_{i}}{V_{i}} \df \Trip{\bigoplus_{i \in I} \H_{i}}{\bigoplus_{i \in I} U_{i}}{\bigoplus_{i \in I} V_{i}}
$$
is also a Heisenberg $ G $-representation.
\end{enumerate}
\end{thm}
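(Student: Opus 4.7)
My plan for part (1) is a direct verification of the four axioms defining a Heisenberg $ G $-representation for the triple $ \Trip{\L{G}}{\lambda}{\Mod} $. That $ \L{G} $ is a Hilbert space and that $ \lambda $ is the strongly continuous unitary left regular representation of $ G $ are classical. For $ \Mod $, unitarity of each $ \Mod_{\varphi} $ is immediate from $ |\Altfunc{\varphi}{y}| = 1 $ for $ \varphi \in \widehat{G} $ and $ y \in G $; the homomorphism property $ \Mod_{\varphi \psi} = \Mod_{\varphi} \Mod_{\psi} $ holds because characters multiply pointwise; strong continuity in $ \varphi $ would be obtained by first verifying it on the dense subspace $ \Cc{G} \subseteq \L{G} $ (using joint continuity of the evaluation pairing on compact sets and dominated convergence) and then extending by the standard $ \varepsilon/3 $ argument, using that the $ \Mod_{\varphi} $ are uniformly norm-bounded.

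The only substantive (but routine) computation is the WCR itself. I would evaluate both operators on an arbitrary $ f \in \L{G} $ at a point $ y \in G $, obtaining on the one hand $ \Altfunc{\Mod_{\varphi} \lambda_{x} f}{y} = \Altfunc{\varphi}{y} \Altfunc{f}{x^{- 1} y} $ and on the other hand $ \Altfunc{\varphi}{x} \Altfunc{\lambda_{x} \Mod_{\varphi} f}{y} = \Altfunc{\varphi}{x} \Altfunc{\varphi}{x^{- 1} y} \Altfunc{f}{x^{- 1} y} $; the character identity $ \Altfunc{\varphi}{x^{- 1} y} = \Altfunc{\varphi}{x}^{- 1} \Altfunc{\varphi}{y} $ then identifies the two expressions and yields $ \Mod_{\varphi} \lambda_{x} = \Altfunc{\varphi}{x} \lambda_{x} \Mod_{\varphi} $.

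For part (2), every axiom transfers coordinatewise. The orthogonal direct sum of Hilbert spaces is a Hilbert space; a direct sum of strongly continuous unitary representations of a fixed group is itself such a representation (strong continuity is verified on the dense subspace of finitely supported vectors of $ \bigoplus_{i} \H_{i} $ and extended to the whole space using uniform boundedness of the operator norms of the summands); and the WCR for $ \Pair{\bigoplus_{i} U_{i}}{\bigoplus_{i} V_{i}} $ follows summand by summand from the WCR for each $ \Pair{U_{i}}{V_{i}} $.

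I do not foresee any real obstacle here. Both parts amount to bookkeeping that records the Schr\"{o}dinger triple as an object of the class just axiomatized and verifies that this class is closed under orthogonal direct sums. The only mild subtlety is the strong continuity of $ \Mod $ and of direct sums of unitary representations; in both cases this is handled by a standard dense-subspace argument, which I would cite rather than write out in full. The point of the statement is to furnish the objects whose classification up to unitary equivalence and direct sums will be the content of the Mackey-Stone-von Neumann Theorem to follow.
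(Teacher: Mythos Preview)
Your proposal is correct; the verifications you outline are exactly the routine ones needed, and there is no hidden subtlety beyond the strong-continuity points you already flag. Note, however, that the paper does not supply its own proof of this statement: it is recorded as a classical fact with attribution to Mackey and Rieffel, and no argument is given. The closest the paper comes is the later \propref{Schroedinger modular representations are Heisenberg modular representations}, whose proof is in the same spirit as yours (a bullet-point checklist of the defining properties, with ``Statement (2) follows routinely from the definitions''), so your approach is entirely consonant with how the authors handle the analogous modular version.
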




\begin{thm}[Mackey-Stone-von Neumann Theorem \cites{Mackey,Rieffel1}]
Let $ G $ be a locally compact abelian group. Then every Heisenberg $ G $-representation is a multiple of the Schr\"{o}dinger $ G $-representation.
\end{thm}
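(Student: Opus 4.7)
The plan is to recast a Heisenberg $G$-representation as a nondegenerate representation of a concrete $C^{\ast}$-algebra that can be identified with the compact operators on $\L{G}$, and then to invoke the representation theory of elementary $C^{\ast}$-algebras. Given a Heisenberg $G$-representation $\Trip{\H}{U}{V}$, the unitary representation $V$ of $\widehat{G}$ integrates to a nondegenerate $\ast$-representation of $\Cstar{\widehat{G}}$; composing with the Pontryagin-Fourier isomorphism $\Cstar{\widehat{G}} \cong \Co{G}$ yields a nondegenerate $\ast$-representation $\pi \colon \Co{G} \to \BB\Br{\H}$. The WCR translates precisely into the covariance identity $U_{x} \func{\pi}{f} U_{x}^{\ast} = \func{\pi}{\func{\tau_{x}}{f}}$, where $\tau$ denotes the translation action of $G$ on $\Co{G}$. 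Hence Heisenberg $G$-representations correspond bijectively to nondegenerate covariant representations of the $C^{\ast}$-dynamical system $\Trip{\Co{G}}{G}{\tau}$, equivalently, to nondegenerate $\ast$-representations of the crossed product $\Co{G} \rtimes_{\tau} G$.

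The second step is to identify $\Co{G} \rtimes_{\tau} G$ with $\func{\KK}{\L{G}}$. This is a Morita-theoretic statement: the Hilbert space $\L{G}$ carries commuting left and right actions exhibiting it as an imprimitivity bimodule (equivalently, as a full right Hilbert $\C$-module with compact left inner products) between $\Co{G} \rtimes_{\tau} G$ and $\C$. It is precisely here that the Hilbert $C^{\ast}$-module technology advocated by the paper enters. A direct check on generators then shows that the Schr\"{o}dinger $G$-representation corresponds under this identification to the standard representation of $\func{\KK}{\L{G}}$ on $\L{G}$.

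Finally, the representation theory of the elementary $C^{\ast}$-algebra $\func{\KK}{\L{G}}$ is well known: it has a unique nonzero irreducible representation up to unitary equivalence (the identity representation on $\L{G}$), and every nondegenerate $\ast$-representation is a direct sum of copies of it. Transporting this decomposition back through the correspondences of the previous two steps produces a set $I$ and a unitary $W \colon \H \to \bigoplus_{i \in I} \L{G}$ intertwining the given Heisenberg $G$-representation with $\bigoplus_{i \in I} \AltTrip{\L{G}}{\lambda}{\Mod}$, which, in view of \thmref{Schroedinger representations are Heisenberg representations}, is the desired conclusion.

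The main obstacle is the middle step. Establishing the isomorphism $\Co{G} \rtimes_{\tau} G \cong \func{\KK}{\L{G}}$ for an arbitrary locally compact abelian $G$ --- without Mackey's original separability hypothesis --- is the technical heart of the theorem in the Morita-equivalence framework, and it is exactly the classical prototype for the Hilbert $C^{\ast}$-module result that the authors advertise in the abstract as the engine driving their much more general modular version.
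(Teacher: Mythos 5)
Your proposal is correct and follows essentially the same route the paper takes: the paper quotes this classical theorem without proof, but its own generalization (Lemma \ref{Bijection between modular and covariant representations} together with the Covariant Stone-von Neumann Theorem for Actions) proceeds exactly as you do --- convert the WCR into covariance for $\Trip{\Co{G}}{G}{\lt}$ via the integrated form of $V$ and the Fourier transform, identify the resulting crossed product with the compacts on $\L{G}$ by crossed-product duality, and conclude because every nondegenerate representation of an elementary $C^{\ast}$-algebra is a multiple of the irreducible one. Your argument is precisely the specialization of the paper's machinery to $A = \C$ (where the double crossed product $\Br{\C \rtimes G} \rtimes_{\widehat{\phantom{\alpha}}} G$ reduces, via Raeburn's observation quoted in \secref{prelim}, to $\Co{G} \rtimes_{\lt} G$), and the middle step you flag as the technical heart is exactly the classical content the paper attributes to \cite{Rieffel1}.
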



Several generalizations of the Mackey-Stone-von Neumann Theorem have appeared in the literature over time. They deal with nonregular group representations (\cite{CMS}) or nonabelian duality (\cite{HKR,Palma}). As the theory of Hilbert $ C^{\ast} $-modules was already highly developed, it seemed natural to expect that the theorem could also be generalized to the setting of Hilbert $ C^{\ast} $-modules, but heuristic arguments against the feasibility of such a generalization were made in \cite{BahtSkeide}, owing to the lack of a spectral decomposition theorem for Hilbert $ C^{\ast} $-modules.

In \cite{huangismert}, L. Ismert and the second author proposed an alternative way of viewing the Mackey-Stone-von Neumann Theorem. Their main idea was that the theorem should be viewed as a special case of a far more general result about the uniqueness of representations of abelian $ C^{\ast} $-dynamical systems on Hilbert $ C^{\ast} $-modules. For a $ C^{\ast} $-dynamical system $ \Trip{A}{G}{\alpha} $ with $ A $ an elementary $ C^{\ast} $-algebra (i.e., isomorphic to the compacts on some Hilbert space) and $ G $ abelian, they were able to completely classify, up to unitary equivalence, all quadruples $ \Quad{X}{\pi}{U}{V} $ with the following properties:
\begin{enumerate}
\item
$ X $ is a Hilbert $ A $-module.

\item
$ \Trip{X}{\pi}{U} $ is a covariant representation of $ \Trip{A}{G}{\alpha} $.

\item
$ \Trip{X}{\pi}{V} $ is a covariant representation of $ \AltTrip{A}{\widehat{G}}{\iota} $, where $ \iota $ denotes the trivial action of $ \widehat{G} $ on $ A $.

\item
$ \Pair{U}{V} $ satisfies the WCR on $ X $.
\end{enumerate}

In the present paper, we further promote the paradigm shift in \cite{huangismert}, streamlining many of their arguments to present what may reasonably be thought of as an abstract Stone-von Neumann Theorem, which we state in terms of Hilbert modules over elementary $ C^{\ast} $-algebras. Notably, this abstract characterization makes no mention of groups but largely takes on the flavor of the von Neumann Uniqueness Theorem (cf. \cite{danacrossed}*{Theorem 4.29}). This abstract characterization offers more flexibility, allowing us to consider dynamical systems involving arbitrary locally compact groups, successfully removing the abelian hypothesis through the tools of nonabelian duality.

We provide a brief overview of nonabelian duality in \S \ref{prelim} as preparation for the parallel sections \S \ref{actions} and \S \ref{coactions} involving the Covariant Stone von-Neumann Theorem for Actions and Coactions respectively.

We thank the referee for comments that improved our paper.

\section{Preliminaries} \label{prelim} 

We refer to \cite{lance} and \cite{enchilada} for Hilbert $ C^{\ast} $-modules and $ C^{\ast} $-correspondences. Here we merely record our conventions. Throughout, $ A $ and $ B $ denote $ C^{\ast} $-algebras, and $ X $ and $ Y $ denote Hilbert $ C^{\ast} $-modules. A \emph{$ B $-$ A $ correspondence} is a Hilbert $ A $-module $ X $ equipped with a homomorphism $ \phi_{X}: B \to \func{\LL}{X} $, which we will always assume to be nondegenerate in the sense that
$$
B X = \{ \FUNC{\func{\phi_{X}}{b}}{x} : b \in B, x \in X \} = X.
$$
We also call $ \phi_{X} $ a \emph{representation} of $ B $ on $ X $. Given a $ C $-$ B $ correspondence $ Y $ and a $ B $-$ A $ correspondence $ X $, we write $ Y \xt_{B} X $ for the $ B $-balanced tensor product, which is a $ C $-$ A $ correspondence. Caution: Lance \cite{lance}*{pages 39--41} would write $ Y \xt_{B} X $ for the \emph{algebraic} $ B $-balanced tensor product, and $ Y \xt_{\phi_{X}} X $ for the completion; nowadays, it seems more customary to use both notations for the completion.

A \emph{$ B $-$ A $ imprimitivity bimodule} is a $ B $-$ A $ correspondence $ X $ such that $ \phi_{X} $ is an isomorphism of $ B $ onto $ \func{\KK}{X} $, in which case we write $ \widetilde{X} $ for the \emph{conjugate} $ A $-$ B $ imprimitivity bimodule. We regard a Hilbert space $ H $ as a $ \func{\KK}{H} $-$ \C $ imprimitivity bimodule (caution: for this purpose, the inner product on $ H $ is taken to be linear in the second variable).

An \emph{isomorphism} of $ B $-$ A $ correspondences $ X $ and $ Y $ is a linear bijection $ \Phi: X \to Y $ such that
\begin{enumerate}
\item
$ \Inner{\func{\Phi}{x}}{\func{\Phi}{y}}_{A} = \Inner{x}{y}_{A} $ and

\item
$ \func{\Phi}{b x} = b \func{\Phi}{x} $
\end{enumerate}
for all $ x,y \in X $ and $ b \in B $. Note that it follows from (1) that $ \func{\Phi}{x a} = \func{\Phi}{x} a $ for all $ x \in X $ and $ a \in A $.

Given a Hilbert $ A $-module $ X $ and a Hilbert $ B $-module $ Y $, the \emph{external tensor product} $ X \xt Y $ is a Hilbert $ \Br{A \xt B} $-module. In particular, if $ B = \C $, then we have a Hilbert space $ Y $, so we can regard the Hilbert $ \Br{A \xt \C} $-module $ X \xt Y $ as a Hilbert $ A $-module.

We record the following standard facts for convenient reference:
\begin{lem*}[Calculus of $ C^{\ast} $-correspondences]
Let $ X $, $ \Seq{X_{i}}{i \in I} $ be Hilbert $ A $-modules and $ Y $ an $ A $-$ C $ correspondence. Then
\begin{enumerate}[label = \textup{(\arabic*)}]
\item
$ X \otimes_{A} A \simeq X $ as Hilbert $ A $-modules;

\item
$ \Br{\bigoplus_{i \in I} X_{i}} \otimes_{A} Y \simeq \bigoplus_{i \in I} \Br{X_{i} \otimes_{A} Y} $ as Hilbert $ C $-modules.
\end{enumerate}
Similarly, if the family $ X $, $ \Seq{X_{i}}{i \in I} $ consists of $ B $-$ A $ correspondences, and $ Z $ is a $ D $-$ B $ correspondence, then also
\begin{enumerate}[resume*]
\item
$ B \otimes_{B} X \simeq X $ as $ B $-$ A $ correspondences;

\item
$ Z \otimes_{B} \Br{\bigoplus_{i \in I} X_{i}} \simeq \bigoplus_{i \in I} \Br{Z \otimes_{B} X_{i}} $ as $ D $-$ A $ correspondences.
\end{enumerate}
If $ X $ is a $ B $-$ A $ imprimitivity bimodule (so that the conjugate $ \widetilde{X} $ exists), then also
\begin{enumerate}[resume*]
\item
$ \widetilde{X} \otimes_{B} X \simeq A $ as $ A $-$ A $ imprimitivity bimodules;

\item
$ X \otimes_{A} \widetilde{X} \simeq B $ as $ B $-$ B $ imprimitivity bimodules.
\end{enumerate}
\end{lem*}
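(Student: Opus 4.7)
The plan is to exhibit, in each of the six parts, the ``obvious'' natural map on elementary tensors (or on the algebraic direct sum), to verify by a direct computation that it preserves the relevant inner product and intertwines the left action whenever one is present, and to conclude that it extends to a bijection of the completed balanced tensor products. Density of the range in each case will reduce to a standard nondegeneracy or spanning statement, after which the inner-product-preservation will upgrade any inner-product-preserving map to an isometric isomorphism in the appropriate category.

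For (1), set $\Phi(x\otimes a)\df xa$ and observe that $\Inner{xa}{yb}_{A}=a^{*}\Inner{x}{y}_{A}b$, which is by definition the inner product on the balanced tensor product; surjectivity then follows from the identity $XA=X$, which holds for any Hilbert $A$-module by an approximate-identity argument. Part (3) is formally dual: $b\otimes x\mapsto\phi_{X}(b)x$ preserves the inner product by the defining formula for the balanced tensor product, and is surjective because $\phi_{X}$ has been assumed nondegenerate, i.e.\ $BX=X$. Parts (2) and (4) both send $(x_{i})_{i}\otimes y\mapsto(x_{i}\otimes y)_{i}$ (or its left-handed analogue); the inner-product computation reduces to the orthogonality of distinct summands together with the fact that $\Inner{(x_{i})_{i}}{(x_{i}')_{i}}_{A}=\sum_{i}\Inner{x_{i}}{x_{i}'}_{A}$ is summable in $A$ (it represents the inner product on the Hilbert-module direct sum).

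Parts (5) and (6) are Rieffel's classical identities attached to an imprimitivity bimodule; send $\tilde x\otimes y\mapsto\Inner{x}{y}_{A}$ and $x\otimes\tilde y\mapsto{}_{B}\Inner{x}{y}$ respectively. The inner-product-preservation calculation hinges on the imprimitivity identity $\phi_{X}\bigl({}_{B}\Inner{x}{y}\bigr)z=x\Inner{y}{z}_{A}$ together with the definition of the inner product on the conjugate bimodule, and density of the range follows from fullness of the two inner products on an imprimitivity bimodule, i.e.\ $\clspn\{\Inner{x}{y}_{A}:x,y\in X\}=A$ and $\clspn\{{}_{B}\Inner{x}{y}:x,y\in X\}=B$. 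There is no genuine obstacle anywhere in the lemma -- each of the six items is a bookkeeping statement appearing in essentially this form in \cite{lance} and \cite{enchilada}, and our collecting them is purely for later reference; the only place where one has to tread carefully is in (5) and (6), where the reader must unwind the definition of the conjugate bimodule and commit to a sign/flip convention for $\Inner{\tilde x}{\tilde y}_{B}$, after which everything reduces to the imprimitivity identity just quoted.
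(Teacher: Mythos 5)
Your proposal is correct and simply spells out the routine verification that the paper itself declines to give: the paper's entire proof is ``The proof is routine,'' deferring to \cite{lance}, \cite{tfb}, and \cite{enchilada}, and your maps on elementary tensors, the inner-product computations, and the density arguments (Cohen factorization for (1) and (3), fullness of the two inner products for (5) and (6)) are exactly the standard arguments those references contain. No discrepancy to report.
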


\begin{proof}
The proof is routine. Standard references include \cite{lance,tfb,enchilada}.
\end{proof}

In Sections \ref{actions} and \ref{coactions}, we will apply the Abstract Modular Stone-von Neumann Theorem to a couple of situations where the imprimitivity bimodule comes from crossed-product duality. For completeness, we review the necessary background. For the remainder of this section, $ G $ is a locally compact group and $ A,B $ are $ C^{\ast} $-algebras. An \emph{action} $ \Trip{A}{G}{\alpha} $ of $ G $ is a strongly continuous homomorphism $ \alpha: G \to \func{\aut}{A} $. A \emph{covariant representation} of $ \Trip{A}{G}{\alpha} $ is a triple $ \Trip{X}{\pi}{U} $, where $ X $ is a Hilbert $ C^{\ast} $-module, $ \pi: A \to \func{\LL}{X} $ is a nondegenerate representation, and $ U: G \to \func{\LL}{X} $ is a strongly continuous unitary representation satisfying
$$
\pi \circ \alpha_{x} = \ad U_{x} \circ \pi \qquad \text{for} ~ x \in G.
$$
If $ X $ is a $ C^{\ast} $-algebra $ B $, regarded as a Hilbert $ B $-module in the canonical way, then $ \func{\LL}{X} = \func{M}{B} $ and the continuity assumption on $ U $ is with respect to the strict topology.

A \emph{crossed product} of $ \Trip{A}{G}{\alpha} $ is a covariant representation $ \Trip{A \rtimes_{\alpha} G}{i_{A}}{i_{G}} $, where $ A \rtimes_{\alpha} G $ is a $ C^{\ast} $-algebra and the pair $ \Pair{i_{A}}{i_{G}} $ is universal in the sense that for every covariant representation $ \Trip{X}{\pi}{U} $, there is a unique nondegenerate representation $ \pi \times U: A \rtimes_{\alpha} G \to \func{\LL}{X} $, called the \emph{integrated form} of $ \Pair{\pi}{U} $, making the diagram
$$
\begin{tikzcd}
  A \arrow[r,"i_{A}"] \arrow[dr,"\pi"']
& \func{M}{A \rtimes_{\alpha} G} \arrow[d,"\pi \times U","!"',dashed]
& G \arrow[l,"i_{G}"'] \arrow[dl,"U"] \\
& \func{\LL}{X}
\end{tikzcd}
$$
commute.

We identify $ A \xt \Co{G} $ with $ \Co{G,A} $ in the usual way. The regular covariant representation of $ \Trip{A}{G}{\alpha} $ is $ \Trip{A \xt \L{G}}{\Br{\Id \xt M} \circ \widetilde{\alpha}}{1 \xt \lambda} $, where $ M $ is the representation of $ \Co{G} $ on $ \L{G} $ by multiplication operators, $ \lambda $ is the left regular representation of $ G $, and $ \widetilde{\alpha}: A \to \func{M}{A \xt \Co{G}} $ is given by
$$
\FUNC{\func{\func{\widetilde{\alpha}}{a}}{b \xt f}}{x} = \func{f}{x} \func{\alpha_{x^{- 1}}}{a} b \qquad
\text{for} ~ a,b \in A, ~ f \in \Co{G}, ~ x \in G.
$$

Dually, a \emph{coaction} $ \Trip{A}{G}{\delta} $ of $ G $ is a homomorphism $ \delta: A \to \func{M}{A \xt \Cstar{G}} $ satisfying both of the following:
\begin{align*}
\clspn \{ \func{\delta}{A} \Br{1 \xt \Cstar{G}} \} & = A \xt \Cstar{G}, \\
\Br{\delta \xt \Id} \circ \delta                   & = \Br{\Id \xt \delta_{G}} \circ \delta,
\end{align*}
where $ \delta_{G}: \Cstar{G} \to \func{M}{\Cstar{G} \xt \Cstar{G}} $ is the integrated form of the unitary homomorphism $ s \mapsto s \xt s $. Note that $ \delta_{G} $ is a coaction of $ G $ on $ \Cstar{G} $. A \emph{covariant representation} of $ \Trip{A}{G}{\delta} $ is a triple $ \Trip{X}{\pi}{\mu} $, where $ X $ is a Hilbert $ C^{\ast} $-module, $ \pi $ and $ \mu $ are nondegenerate representations of $ A $ and $ \Co{G}$, respectively, on $ X $ satisfying
$$
\Br{\delta \xt \Id} \circ \delta = \ad \Func{\mu \xt \Id}{w_{G}} \circ \delta,
$$
and where $ w_{G} \in \func{M}{\Co{G} \xt \Cstar{G}} $ corresponds to the canonical embedding of $ G $ in $ \func{M}{\Cstar{G}} $. A \emph{crossed product} of $ \Trip{A}{G}{\delta} $ is a covariant representation $(A\rtimes_\delta G,j_A,j_G)$, where $ A \rtimes_{\delta} G $ is a $ C^{\ast} $-algebra and the pair $ \Pair{j_{A}}{j_{G}} $ is universal in the sense that for every covariant representation $ \Trip{X}{\pi}{\mu} $, there is a unique nondegenerate representation $ \pi \times \mu: A \rtimes_{\delta} G \to \func{\LL}{X} $, called the \emph{integrated form} of $ \Pair{\pi}{\mu} $, making the diagram
$$
\begin{tikzcd}
  A \arrow[r,"j_{A}"] \arrow[dr,"\pi"']
& \func{M}{A \rtimes_{\delta} G} \arrow[d,"\pi \times \mu","!"',dashed]
& \Co{G} \arrow[l,"j_{G}"'] \arrow[dl,"\mu"] \\
& \func{\LL}{X}
\end{tikzcd}
$$
commute.

The \emph{regular covariant representation} of $ \Trip{A}{G}{\delta} $ is $ \Trip{A \xt \L{G}}{\Br{\Id \xt \lambda} \circ \delta}{1 \xt M} $.

Raeburn shows in \cite{raeburnrep}*{Example 2.9 (1)} that if $ U: G \to \func{\LL}{X} $ is a unitary representation and $ \mu: \Co{G} \to \func{\LL}{X} $ is a nondegenerate representation, then $ \Trip{X}{U}{\mu} $ is a covariant representation of $ \Trip{\Cstar{G}}{G}{\delta_{G}} $ if and only if $ \Trip{X}{\mu}{U} $ is a covariant representation of $ \Trip{\Co{G}}{G}{\lt} $, where $ \lt $ is the action of $ G $ on $ \Co{G} $ given by left translation.

If $ \Trip{A}{G}{\alpha} $ is an action, then the \emph{dual coaction} $ \widehat{\alpha} $ of $ G $ on $ A \rtimes_{\alpha} G $ is determined by
\begin{align*}
\func{\widehat{\alpha}}{\func{i_{A}}{a}} & = \func{i_{A}}{a} \xt 1 \qquad \text{for} ~ a \in A, \\
\func{\widehat{\alpha}}{\func{i_{G}}{x}} & = \func{i_{G}}{x} \xt x \qquad \text{for} ~ x \in G.
\end{align*}
If $ \Trip{A}{G}{\delta} $ is a coaction, then the \emph{dual action} $ \widehat{\delta} $ of $ G $ on $ A \rtimes_{\delta} G $ is determined by
\begin{align*}
\widehat{\delta}_{x} \circ j_{A} & = j_{A}, \\
\widehat{\delta}_{x} \circ j_{G} & = j_{G} \circ \rt_{x} \qquad \text{for} ~ x \in G,
\end{align*}
where $ \rt $ is the action of $ G $ on $ \Co{G} $ given by right translation.

\section{The Abstract Stone-von Neumann Theorem} 

We now specialize the methods of the preliminaries to the case of $ X $ a nonzero Hilbert $ A $-module, where $ A $ is an elementary $ C^{\ast} $-algebra. Thus $ A \cong \func{\KK}{H} $ for some nonzero Hilbert space $ H $, which is necessarily unique up to isomorphism (see \cite{dixmier}*{Corollary 4.1.6}). Since $ A $ is simple, the Hilbert module $ X $ is full.

A Hilbert space $ H $ is already an $ A $-$ \C $ imprimitivity bimodule, so its conjugate $ \widetilde{H} $ is a $ \C $-$ A $ imprimitivity bimodule. For any Hilbert space $ L $, $ L \otimes_{\C} \widetilde{H} $ is a Hilbert $ A $-module. In fact, every Hilbert $ A $-module is of this form, as we show in the following lemma.

\begin{lem} \label{hs factor}
If $ X $ is any Hilbert $ A $-module, then there is a Hilbert space $ L $ such that $ X \cong L \otimes_{\C} \widetilde{H} $ as Hilbert $ A $-modules. This Hilbert space is unique up to isomorphism.
\end{lem}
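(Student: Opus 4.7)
The plan is to construct $L$ explicitly via tensor products and then verify the isomorphism using the calculus of $C^{\ast}$-correspondences recorded in the lemma above.

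First, I would set $L \df X \otimes_{A} H$. Since $H$ is an $A$-$\C$ imprimitivity bimodule and $X$ is a Hilbert $A$-module (i.e., a $\C$-$A$ correspondence via the trivial left action), the balanced tensor product $L = X \otimes_{A} H$ is a Hilbert $\C$-module, which is just a Hilbert space.

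Next, I would verify that this $L$ works. Using associativity of the balanced tensor product together with the calculus lemma, I compute
\[
L \otimes_{\C} \widetilde{H}
= \Br{X \otimes_{A} H} \otimes_{\C} \widetilde{H}
\simeq X \otimes_{A} \Br{H \otimes_{\C} \widetilde{H}}
\simeq X \otimes_{A} A
\simeq X,
\]
where the second isomorphism uses item (6) of the calculus lemma (applied to the $A$-$\C$ imprimitivity bimodule $H$, so that $H \otimes_{\C} \widetilde{H} \simeq A$ as $A$-$A$ imprimitivity bimodules), and the last uses item (1).

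For uniqueness, I would reverse the construction. Suppose $X \simeq L' \otimes_{\C} \widetilde{H}$ for some Hilbert space $L'$. Then, again by associativity and item (5) of the calculus lemma (which gives $\widetilde{H} \otimes_{A} H \simeq \C$),
\[
X \otimes_{A} H
\simeq \Br{L' \otimes_{\C} \widetilde{H}} \otimes_{A} H
\simeq L' \otimes_{\C} \Br{\widetilde{H} \otimes_{A} H}
\simeq L' \otimes_{\C} \C
\simeq L',
\]
so $L' \simeq X \otimes_{A} H = L$ as Hilbert spaces.

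The only minor obstacle is bookkeeping: one must be careful that the associativity isomorphisms used above respect the left and right actions involved (all of which are trivial or canonical here), and that the construction $L = X \otimes_{A} H$ genuinely produces a Hilbert space (which it does, since $X$ is a full Hilbert $A$-module and the left action on $H$ is nondegenerate). All of this is standard and follows immediately from the references cited for the calculus lemma, so there is no real difficulty beyond chasing the definitions.
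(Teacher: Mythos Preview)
Your proof is correct and follows essentially the same route as the paper: both take $L = X \otimes_{A} H$ and use the calculus of imprimitivity bimodules (associativity plus $H \otimes_{\C} \widetilde{H} \simeq A$) to obtain $L \otimes_{\C} \widetilde{H} \simeq X$, and both establish uniqueness by tensoring on the right with $H$. Your version is slightly more explicit about which items of the calculus lemma are invoked, but the argument is the same.
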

 
\begin{proof}
We take $ L = X \xt_{A} H $, which is a Hilbert space since $ H $ is a Hilbert $ \C $-module. In fact, $ H $ is an $ A $-$ \C $ imprimitivity bimodule, and the calculus of imprimitivity bimodules gives
$$
       L \xt_{\C} \widetilde{H}
\simeq X \xt_{A} H \xt_{\C} \widetilde{H}
\simeq X \xt_{A} A
\simeq X.
$$
For uniqueness up to isomorphism, if $ M $ is another Hilbert space such that $ L \xt_{\C} \widetilde{H} \simeq M \xt_{\C} \widetilde{H} $, then similar computations show that tensoring with $ H $ gives $ L \simeq M $.
\end{proof}

\begin{lem} \label{op restrict} 
In \lemref{hs factor}, identify $ X $ with $ L \xt_{\C} \widetilde{H} $. Then the map
$$
b \mapsto b \xt_{\C} 1
$$
from $ \func{\KK}{L} $ to $ \func{\LL}{X} $ is an isomorphism onto $ \func{\KK}{X} $. In this way, \lemref{hs factor} gives an isomorphism of $ \func{\KK}{X} $-$ A $ correspondences.
\end{lem}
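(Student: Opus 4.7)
The plan is to identify $ X $ with $ L \xt_{\C} \widetilde{H} $ via \lemref{hs factor} and read off $ \func{\KK}{X} $ from the external tensor product structure, exploiting the fact that $ \widetilde{H} $ is a $ \C $-$ A $ imprimitivity bimodule and hence $ \func{\KK}{\widetilde{H}} \cong \C $ canonically. The first step is to interpret ``$ b \xt 1 $'' precisely: the left $ \C $-action $ \phi \colon \C \to \func{\LL}{\widetilde{H}} $ of the imprimitivity bimodule is a nondegenerate isomorphism onto $ \func{\KK}{\widetilde{H}} $, and nondegeneracy forces $ \func{\phi}{1} = 1_{\func{\LL}{\widetilde{H}}} $, so the map $ b \mapsto b \xt 1 $ of the statement is literally the inflation $ b \mapsto b \xt_{\C} \func{\phi}{1} $ arising from the external tensor product action of $ \func{\LL}{L} \xt \func{\LL}{\widetilde{H}} $ on $ L \xt_{\C} \widetilde{H} $.

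Next, I would invoke the standard external-tensor-product identification $ \func{\KK}{L \xt_{\C} \widetilde{H}} \cong \func{\KK}{L} \xt \func{\KK}{\widetilde{H}} $ from \cite{lance}*{Chapter 4}; under $ \func{\KK}{\widetilde{H}} \cong \C $ the target collapses to $ \func{\KK}{L} $, and the composite
$$
\func{\KK}{L} \xrightarrow{b \mapsto b \xt \func{\phi}{1}} \func{\KK}{L} \xt \func{\KK}{\widetilde{H}} \cong \func{\KK}{L \xt_{\C} \widetilde{H}} = \func{\KK}{X}
$$
is an isomorphism onto $ \func{\KK}{X} $. If a more elementary route is preferred, simplicity of $ \func{\KK}{L} $ supplies injectivity, while surjectivity onto $ \func{\KK}{X} $ follows from the rank-one identity $ \theta_{\ell_{1} \xt \xi_{1},\ell_{2} \xt \xi_{2}} = \theta_{\ell_{1},\ell_{2}} \xt \theta_{\xi_{1},\xi_{2}} $ together with $ \theta_{\xi_{1},\xi_{2}} \in \C \cdot 1 $.

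For the ``$ \func{\KK}{X} $-$ A $ correspondence'' clause, \lemref{hs factor} already furnishes an isomorphism of Hilbert $ A $-modules; the identification of $ \func{\KK}{X} $ with $ \func{\KK}{L} $ just produced says precisely that, under that isomorphism, the tautological left action of $ \func{\KK}{X} $ on $ X $ corresponds to the action $ b \mapsto b \xt 1 $ of $ \func{\KK}{L} $ on $ L \xt_{\C} \widetilde{H} $, so equivariance is automatic. I expect the main obstacle to be purely bookkeeping --- verifying that $ \func{\phi}{1} $ acts as the identity on $ \widetilde{H} $ and that the canonical identification of compacts on the external tensor product is compatible with the Hilbert-module isomorphism of \lemref{hs factor}; there is no essential analytic content beyond that.
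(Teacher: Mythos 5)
Your argument is correct and follows essentially the same route as the paper, which simply cites Lance's Proposition 4.7 (the compatibility of compact operators with tensoring against an imprimitivity bimodule, here using that $\func{\KK}{\widetilde{H}} \cong \C$ since $\widetilde{H}$ is a $\C$-$A$ imprimitivity bimodule) and notes that the correspondence statement then follows at once. Your version just unpacks the details Lance's result encapsulates, including the rank-one computation and the equivariance bookkeeping.
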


\begin{proof}
Since $ H $ is an $ A $-$ \C $ imprimitivity bimodule, the first assertion follows from \cite{lance}*{Proposition 4.7}, and then the second assertion follows immediately.
\end{proof}

\begin{rem}
With the notation above, Schweizer proves in \cite{schweizer}*{Lemma 3} that $ X \simeq \func{\KK}{H,L} $. We can give an alternative proof of this by combining \lemref{hs factor} with the elementary fact that for any $ C^{\ast} $-algebra $ D $ and any two Hilbert $ D $-modules $ X $ and $ Y $, there is a unique isomorphism
$$
\Upsilon: X \xt_{D} \widetilde{Y} \xrightarrow{\simeq} \func{\KK}{Y,X}
$$
of $ \func{\KK}{X} $-$ \func{\KK}{Y} $ correspondences such that
$$
\func{\Upsilon}{x \xt \widetilde{y}} = \theta_{x,y} \qquad \text{for} ~ x \in X, ~ y \in Y.
$$

Actually, Schweizer gets the Hilbert space $ L $ a different way, namely by recognizing that $ \func{\KK}{X} $ is an elementary $ C^{\ast} $-algebra. It is easy to see that his Hilbert space is isomorphic to our $ L $. Related results on Hilbert modules over elementary $ C^{\ast} $-algebras are contained in, for example, \cite{bakgulcompact,magajna}.
\end{rem}

\begin{defn}
For any two $ B $-$ A $ correspondences $ X $ and $ Y $, we say that $ Y $ is a \emph{multiple} of $ X $ provided $ Y \cong \bigoplus_{I} X $ for some set $ I $.
\end{defn}

\begin{prop}[Abstract Modular Stone-von Neumann Theorem] \label{The Abstract Modular Stone-von Neumann Theorem}
If $ A $ is elementary and $ X $ a $ B $-$ A $ imprimitivity bimodule, then every $ B $-$ A $ correspondence is a multiple of $ X $.
\end{prop}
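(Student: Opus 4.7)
The plan is to exploit the Morita equivalence supplied by $X$ to reduce the general statement to the corresponding claim for $A$-$A$ correspondences, after which an appeal to \lemref{hs factor} (the structural classification of Hilbert modules over an elementary $C^{\ast}$-algebra) finishes the job.

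For the reduction, given any $B$-$A$ correspondence $Y$, I would set $Z \df \widetilde{X} \otimes_{B} Y$, which is an $A$-$A$ correspondence. Parts (3) and (6) of the calculus of correspondences then yield
\[
Y \simeq B \otimes_{B} Y \simeq \Br{X \otimes_{A} \widetilde{X}} \otimes_{B} Y \simeq X \otimes_{A} Z.
\]
Hence if we know $Z$ is a multiple of $A$, say $Z \simeq \bigoplus_{I} A$ as $A$-$A$ correspondences, part (4) of the calculus (combined with part (1) to identify $X \otimes_{A} A$ with $X$) gives
\[
Y \simeq X \otimes_{A} \bigoplus_{I} A \simeq \bigoplus_{I} \Br{X \otimes_{A} A} \simeq \bigoplus_{I} X,
\]
which is exactly what we want.

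For the $A$-$A$ case, I would use that $A \cong \func{\KK}{H}$, so $H$ is an $A$-$\C$ imprimitivity bimodule and parts (5)--(6) give $A \simeq H \otimes_{\C} \widetilde{H}$ as $A$-$A$ imprimitivity bimodules. Thus for any $A$-$A$ correspondence $Z$,
\[
Z \simeq A \otimes_{A} Z \simeq H \otimes_{\C} W, \qquad W \df \widetilde{H} \otimes_{A} Z,
\]
where $W$, being a $\C$-$A$ correspondence, is simply a Hilbert $A$-module. Then \lemref{hs factor} produces a Hilbert space $L$ with $W \simeq L \otimes_{\C} \widetilde{H}$; choosing an orthonormal basis $I$ of $L$ gives $L \simeq \bigoplus_{I} \C$, hence $W \simeq \bigoplus_{I} \widetilde{H}$ and
\[
Z \simeq H \otimes_{\C} \bigoplus_{I} \widetilde{H} \simeq \bigoplus_{I} \Br{H \otimes_{\C} \widetilde{H}} \simeq \bigoplus_{I} A.
\]

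There is no deep obstacle: \lemref{hs factor} supplies the only substantive structural input, and the correspondence calculus carries out the bookkeeping. The one point requiring genuine care is to check that each isomorphism in the chain respects the appropriate bimodule structure (not merely the underlying Hilbert $A$-module), so that the final conclusion $Y \simeq \bigoplus_{I} X$ lives in the category of $B$-$A$ correspondences rather than merely in the category of Hilbert $A$-modules; this is automatic from the functoriality of the balanced tensor product in each of its arguments.
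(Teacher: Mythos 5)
Your proof is correct, but it takes a genuinely different route from the paper's. The paper applies \lemref{hs factor} and \lemref{op restrict} to both $X$ and $Y$ at once, writing $X = L \xt_{\C} \widetilde{H}$ and $Y = M \xt_{\C} \widetilde{H}$ with $B$ acting irreducibly on $L$ and nondegenerately on $M$, and then invokes the classical fact that a nondegenerate representation of the elementary $C^{\ast}$-algebra $B \cong \func{\KK}{L}$ on the Hilbert space $M$ is a multiple of its irreducible representation, so that $M \simeq \bigoplus_{I} L$ as $B$-$\C$ correspondences; tensoring with $\widetilde{H}$ then finishes. You instead perform a two-stage Morita reduction --- first stripping $X$ off the left to reduce to the claim that every $A$-$A$ correspondence is a multiple of $A$, then stripping $H$ off to reduce to a bare Hilbert $A$-module --- and you extract the multiplicity set $I$ from an orthonormal basis of the Hilbert space supplied by \lemref{hs factor}. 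What your route buys is that the only substantive structural input is \lemref{hs factor} itself (plus the existence of orthonormal bases); you never need the Hilbert-space representation theory of the compacts, which instead falls out as a corollary of your intermediate claim about $A$-$A$ correspondences. What it costs is a longer chain of tensor identities, including associativity of the balanced tensor product, which is not listed in the calculus lemma but is of course routine. Your closing caveat about verifying that each isomorphism respects the $B$-$A$ correspondence structure is well placed: the one spot where the stated calculus gives slightly less than you use is part (1), which is asserted only as an isomorphism of Hilbert $A$-modules, but the canonical map $x \xt a \mapsto x a$ visibly intertwines the left actions, so the argument goes through.
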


\begin{proof}
Let $ Y $ be a $ B $-$ A $ correspondence. By Lemmas \ref{hs factor} and \ref{op restrict}, without loss of generality,
$$
X = L \xt_{\C} \widetilde{H} \quad \text{and} \quad
Y = M \xt_{\C} \widetilde{H},
$$
where $ H $, $ M $, and $ L $ are Hilbert spaces, with $ B $ acting irreducibly on $ L $ and nondegenerately on $ M $. Since $ B $ is elementary, its image in $ \func{\BB}{L} $ is the algebra of compact operators $ \func{\KK}{L} $. Thus the nondegenerate representation of $ B $ on $ M $ is a multiple of the irreducible representation on $ L $. In other words, for some set $ I $, we have $ M \simeq \bigoplus_{I} L $ as $ B $-$ \C $ correspondences.

Then by the calculus of $ C^{\ast} $-correspondences,
\begin{align*}
         M \xt_{\C} \widetilde{H}
& \simeq \Br{\bigoplus_{I} L} \xt_{\C} \widetilde{H} \\
& \simeq \bigoplus_{I} (L \xt_{\C} \widetilde{H}) \\
& \simeq \bigoplus_{I} X
\end{align*}
as $ B $-$ A $ correspondences, and we are done.
\end{proof}

\begin{rem}
\propref{The Abstract Modular Stone-von Neumann Theorem} could be used to help classify Hilbert modules over elementary $ C^{\ast} $-algebras. However, in fact \cite{subgroup}*{Remark 6.5} gives a classification where the coefficient algebra can be any direct sum of elementary $ C^{\ast} $-algebras.
\end{rem}

\section{The Covariant Stone-von Neumann Theorem for Actions} \label{actions} 

The Covariant Stone-von Neumann Theorem for Actions of Abelian Groups formulated and proven in \cite{huangismert} generalizes the Mackey-Stone-von Neumann Theorem by replacing Hilbert spaces by Hilbert $ C^{\ast} $-modules as the basis of representations. However, the theorem is more than just a statement about representations of a locally compact abelian group $ G $ and its dual $ \widehat{G} $ on a Hilbert $ C^{\ast} $-module, as the WCR alone suggests. As a Hilbert $ C^{\ast} $-module is a right module $ X $ over a $ C^{\ast} $-algebra $ A $, one can introduce a strongly continuous action $ \alpha $ of $ G $ on $ A $ and attempt to classify ``representations'' of the $ C^{\ast} $-dynamical system $ \Trip{A}{G}{\alpha} $ on $ X $, assuming one knows precisely what these ``representations'' are. The next definition seeks to realize this assumption.



\begin{defn}[\cite{huangismert}]
Let $ \Trip{A}{G}{\alpha} $ be a $ C^{\ast} $-dynamical system with $ G $ abelian. Then an \emph{abelian Heisenberg $ \Trip{A}{G}{\alpha} $-modular representation} is defined as a quadruple $ \Quad{X}{\pi}{U}{V} $ with the following properties:
\begin{enumerate}
\item
$ X $ is a Hilbert $ A $-module.

\item \label{pi U}
$ \Trip{X}{\pi}{U} $ is a covariant representation of $ \Trip{A}{G}{\alpha} $.

\item \label{pi V}
$ \Trip{X}{\pi}{V} $ is a covariant representation of $ \AltTrip{A}{\widehat{G}}{\iota} $, where $ \iota $ denotes the trivial action of $ \widehat{G} $ on $ A $.

\item
$ \Pair{U}{V} $ satisfies the WCR on $ X $: For all $ x \in G $ and $ \varphi \in \widehat{G} $,
$$
V_{\varphi} U_{x} = \func{\varphi}{x} U_{x} V_{\varphi}.
$$
\end{enumerate}
An abelian Heisenberg $ \Trip{A}{G}{\alpha} $-modular representation $ \Quad{X}{\pi}{U}{V} $ is said to be \emph{equivalent} to another one $ \Quad{X'}{\pi'}{U'}{V'} $ if and only if there is a unitary map $ W: X \to X' $ such that for all $ a \in A $, $ x \in G $, and $ \varphi \in \widehat{G} $,
$$
W \func{\pi}{a} W^{- 1} = \func{\pi'}{a}, \quad
W U_{x}         W^{- 1} = U'_{x},         \quad \text{and} \quad
W V_{\varphi}   W^{- 1} = V'_{\varphi},
$$
in which case we write $ \Quad{X}{\pi}{U}{V} \simeq \Quad{X'}{\pi'}{U'}{V'} $. Note that $ \simeq $ is an equivalence relation on the class of all abelian Heisenberg $ \Trip{A}{G}{\alpha} $-modular representations.
\end{defn}


Note that in the definition above, in addition to the WCR, there are two commutation relations coming from the covariant representations in Properties \eqref{pi U} and \eqref{pi V}. These two relations are novel features of the Covariant Stone-von Neumann Theorem for Actions of Abelian Groups that do not arise in the Mackey-Stone-von Neumann Theorem.



\begin{defn}[\cite{huangismert}]
Let $ \Trip{A}{G}{\alpha} $ be a $ C^{\ast} $-dynamical system with $ G $ abelian. Then the \emph{abelian Schr\"{o}dinger $ \Trip{A}{G}{\alpha} $-modular representation} is the quadruple
$$
\AltQuad{A \otimes \L{G}}{\Br{\Id \otimes M} \circ \widetilde{\alpha}}{1 \otimes \lambda}{1 \otimes \Mod},
$$
where $ \AltTrip{A \otimes \L{G}}{\Br{\Id \xt M} \circ \widetilde{\alpha}}{1 \xt \lambda} $ is the regular covariant representation of $ \Trip{A}{G}{\alpha} $ (defined in the introduction) and $ \Mod $ is the phase modulation of $ G $, as in Definition \ref{Schroedinger representation}.
\end{defn}


The following theorem is reminiscent of Theorem \ref{Schroedinger representations are Heisenberg representations}.



\begin{thm}[\cite{huangismert}]
Let $ \Trip{A}{G}{\alpha} $ be a $ C^{\ast} $-dynamical system with $ G $ abelian. Then the following statements are true:
\begin{enumerate}[label = \textup{(\arabic*)}]
\item
The abelian Schr\"{o}dinger $ \Trip{A}{G}{\alpha} $-modular representation is an abelian Heisenberg $ \Trip{A}{G}{\alpha} $-modular representation.

\item
Let $ I $ be a set and $ \Quad{X_{i}}{\pi_{i}}{U_{i}}{V_{i}}_{i \in I} $ an $ I $-indexed family of abelian Heisenberg $ \Trip{A}{G}{\alpha} $-modular representations. Then the direct sum
$$
\bigoplus_{i \in I} \Quad{X_{i}}{\pi_{i}}{U_{i}}{V_{i}}_{i \in I} \df \Quad{\bigoplus_{i \in I} X_{i}}{\bigoplus_{i \in I} \pi_{i}}{\bigoplus_{i \in I} U_{i}}{\bigoplus_{i \in I} V_{i}}
$$
is also an abelian Heisenberg $ \Trip{A}{G}{\alpha} $-modular representation.
\end{enumerate}
\end{thm}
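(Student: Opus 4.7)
My plan is to verify the two parts directly from the definitions, using the structural properties of the regular covariant representation recalled in \secref{prelim}.

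For part (1), three of the four conditions in the definition of an abelian Heisenberg $\Trip{A}{G}{\alpha}$-modular representation are essentially built into the construction: $X = A \xt \L{G}$ is manifestly a Hilbert $A$-module, and the triple $\Trip{X}{\Br{\Id\xt M}\circ\widetilde{\alpha}}{1\xt\lambda}$ is, by definition, the regular covariant representation of $\Trip{A}{G}{\alpha}$, hence covariant. What remains to check is that $\Trip{X}{\pi}{V}$ is a covariant representation of the trivial action $\Trip{A}{\widehat{G}}{\iota}$ and that $\Pair{U}{V}$ satisfies the WCR on $X$.

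Covariance with respect to the trivial action degenerates to the assertion that $V_{\varphi}$ commutes with $\func{\pi}{a}$ for every $\varphi \in \widehat{G}$ and $a \in A$. Unwinding the definition, $\func{\pi}{a} = \Br{\Id\xt M}\Br{\func{\widetilde{\alpha}}{a}}$ acts on $A\xt \L{G}$ as pointwise left-multiplication by the bounded continuous $A$-valued function $x\mapsto \func{\alpha_{x^{-1}}}{a}$, while $V_{\varphi} = 1\xt\Mod_{\varphi}$ is pointwise multiplication by the scalar function $\varphi$; the two operators commute because $\func{\varphi}{x}\in\C$ commutes with $\func{\alpha_{x^{-1}}}{a}\in A$. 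The WCR for $\Pair{U}{V} = \Pair{1\xt\lambda}{1\xt\Mod}$ reduces, by the tensor-product structure, to the classical WCR for $\Pair{\lambda}{\Mod}$ on $\L{G}$, which is a routine change-of-variables computation in the translation argument.

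Part (2) is essentially a bookkeeping exercise. The direct sum of nondegenerate representations is nondegenerate, the direct sum of strongly continuous unitary representations is strongly continuous and unitary, and each of the three algebraic relations---the covariance $\pi_{i}\circ\alpha_{x} = \ad\Seq{U_{i}}{x}\circ\pi_{i}$, the analogous trivial-action relation for $V_{i}$, and the WCR $\Seq{V_{i}}{\varphi}\Seq{U_{i}}{x} = \func{\varphi}{x}\Seq{U_{i}}{x}\Seq{V_{i}}{\varphi}$---holds on every summand, hence componentwise on the direct sum.

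I do not expect a real obstacle. The only point requiring genuine care is the first commutation in part (1), where one must unwind the definitions of $\widetilde{\alpha}$ and $\Id\xt M$ to recognize both $\func{\pi}{a}$ and $V_{\varphi}$ as pointwise multiplication operators on the $\L{G}$ factor; once this is observed, the remaining verifications are purely formal.
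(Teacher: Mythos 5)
Your proof is correct. The paper itself quotes this theorem from \cite{huangismert} without reproving it, but your direct verification --- identifying $\func{\pi}{a}$ and $V_{\varphi}$ as commuting pointwise multiplication operators on $A \xt \L{G}$, reducing the WCR to the classical relation for $\Pair{\lambda}{\Mod}$ on $\L{G}$, and treating the direct sum componentwise --- is essentially the same argument the paper gives for the nonabelian analogue in \propref{Schroedinger modular representations are Heisenberg modular representations}, where the corresponding commutation is likewise deduced from commutativity on the $\L{G}$ factor and the direct-sum statement is dismissed as routine.
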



The Covariant Stone-von Neumann Theorem for Actions of Abelian Groups can now be stated concisely as follows.



\begin{thm}[Covariant Stone-von Neumann Theorem for Actions of Abelian Groups \cite{huangismert}]
Let $ \Trip{A}{G}{\alpha} $ be a $ C^{\ast} $-dynamical system with $ A $ an elementary $ C^{\ast} $-algebra and $ G $ abelian. Then every abelian Heisenberg $ \Trip{A}{G}{\alpha} $-modular representation is a multiple of the abelian Schr\"{o}dinger $ \Trip{A}{G}{\alpha} $-modular representation.
\end{thm}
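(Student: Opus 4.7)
The plan is to deduce this theorem from the Abstract Modular Stone--von Neumann Theorem (\propref{The Abstract Modular Stone-von Neumann Theorem}) by encoding a Heisenberg modular representation as a $ B $-$ A $ correspondence for an appropriately chosen $ B $, with the Schr\"{o}dinger representation playing the role of the imprimitivity bimodule. Since $ G $ is abelian, Pontryagin duality converts the dual coaction $ \widehat{\alpha} $ of $ G $ on $ A \rtimes_{\alpha} G $ into a genuine action of $ \widehat{G} $, which I will still denote $ \widehat{\alpha} $. The three commutation relations in the definition---covariance for $ \alpha $, covariance for the trivial action $ \iota $, and the WCR---repackage exactly as covariance of the pair $ \Br{\pi \times U, V} $ with respect to $ \widehat{\alpha} $: the integrated form $ \pi \times U $ sends $ i_{A}(a) \mapsto \func{\pi}{a} $ and $ i_{G}(x) \mapsto U_{x} $, and plugging these into the identity $ V_{\varphi} \Br{\pi \times U}(\cdot) V_{\varphi}^{- 1} = \Br{\pi \times U} \circ \widehat{\alpha}_{\varphi}(\cdot) $ recovers Properties (3) and (4) taken together.

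Setting $ B \df \Br{A \rtimes_{\alpha} G} \rtimes_{\widehat{\alpha}} \widehat{G} $, the iterated integrated form $ \Br{\pi \times U} \times V $ therefore gives a nondegenerate representation of $ B $ on $ X $, equipping $ X $ with the structure of a $ B $-$ A $ correspondence. Takai duality supplies a canonical isomorphism $ B \cong A \xt \func{\KK}{\L{G}} $, and I would verify that under this isomorphism the Schr\"{o}dinger data $ \Br{\Id \xt M} \circ \widetilde{\alpha} $, $ 1 \xt \lambda $, $ 1 \xt \Mod $ exhibit $ A \xt \L{G} $ as the external tensor product of the identity $ A $-$ A $ imprimitivity bimodule with the $ \func{\KK}{\L{G}} $-$ \C $ imprimitivity bimodule $ \L{G} $. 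In particular, $ A \xt \L{G} $ becomes a $ B $-$ A $ imprimitivity bimodule whose correspondence structure is precisely that of the Schr\"{o}dinger representation.

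With both identifications in hand, and since $ A $ is elementary by hypothesis, \propref{The Abstract Modular Stone-von Neumann Theorem} applies and yields an isomorphism of $ B $-$ A $ correspondences $ X \cong \bigoplus_{i \in I} \Br{A \xt \L{G}} $ for some index set $ I $. Because the $ B $-action on either side encodes the triple $ \Br{\pi, U, V} $ through the integrated-form correspondence, restricting this isomorphism to the images of $ i_{A} $, $ i_{G} $, and the $ \widehat{G} $-generators of $ B $ produces a unitary $ W $ that simultaneously intertwines $ \pi $, $ U $, and $ V $ with their Schr\"{o}dinger direct sums, which is exactly the required equivalence of modular representations.

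The main obstacle is the concrete verification that the Schr\"{o}dinger operators implement the canonical tensor-product imprimitivity-bimodule structure on $ A \xt \L{G} $ under Takai's isomorphism. Unwinding the definitions of $ \widetilde{\alpha} $, $ \lambda $, $ \Mod $, and $ M $ against the Takai isomorphism on generators reduces this step to the computational heart of the classical Mackey--Stone--von Neumann theorem inside the crossed-product framework; with that identification in hand, the conclusion follows formally from the abstract proposition.
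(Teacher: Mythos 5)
Your proposal is correct and follows essentially the same route as the paper: identify the Heisenberg modular representation with a nondegenerate representation of the double crossed product, use duality to recognize $ A \otimes \L{G} $ as the corresponding imprimitivity bimodule via the integrated Schr\"{o}dinger representation, and invoke \propref{The Abstract Modular Stone-von Neumann Theorem}. The only divergence is that you work with the dual action of $ \widehat{G} $ and classical Takai duality, whereas the paper converts $ V $ into a representation $ \mu $ of $ \Co{G} $ via the Fourier transform and uses the dual coaction together with Imai--Takai duality --- equivalent for abelian $ G $, but it is the latter formulation that allows the paper to drop the abelian hypothesis.
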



In order to generalize the Covariant Stone-von Neumann Theorem for Actions of Abelian Groups to include actions of nonabelian groups, we must first reformulate the WCR in nonabelian terms.

Let $ \Trip{A}{G}{\alpha} $ be a $ C^{\ast} $-dynamical system with $ G $ abelian, and let the following data be given:
\begin{itemize}
\item
A Hilbert $ A $-module $ X $.

\item
A unitary representation $ U $ of $ G $ on $ X $.

\item
A unitary representation $ V $ of $ \widehat{G} $ on $ X $.

\item
A representation $ \pi $ of $ A $ by adjointable operators on $ X $.
\end{itemize}
Let $ \widetilde{V} $ denote the integrated form of $ V $, which is a representation of $ \Cstar{\widehat{G}} $ on $ X $. Letting $ \mathcal{F}: \Cstar{\widehat{G}} \to \Co{G} $ denote the $ \widehat{G} $-Fourier transform, we get a representation $ \mu \df \widetilde{V} \circ \mathcal{F}^{- 1} $ of $ \Co{G} $ on $ X $. It is a standard consequence of properties of Fourier transforms that $ \Pair{U}{V} $ satisfies the WCR if and only if $ \Trip{X}{\mu}{U} $ is a covariant representation of $ \Trip{\Co{G}}{G}{\lt} $ (see \cite{danacrossed}*{Lemma 4.28}, for example).

This reformulation of the WCR does not appeal to the abelian nature of $ G $ and can thus be applied even to nonabelian groups. Moreover, $ \Trip{X}{\pi}{V} $ is a covariant representation of $ \AltTrip{\widehat{G}}{A}{\iota} $ if and only if $ \pi $ and $ \mu $ commute.

These observations motivate the following alternative definition of a modular representation of a $ C^{\ast} $-dynamical system, which makes sense even if the underlying group is not abelian.



\begin{defn} \label{Heisenberg modular representation}
Let $ \Trip{A}{G}{\alpha} $ be a $ C^{\ast} $-dynamical system. Then a \emph{Heisenberg $ \Trip{A}{G}{\alpha} $-modular representation} is a quadruple $ \Quad{X}{\pi}{U}{\mu} $ with the following properties:
\begin{enumerate}
\item
$ X $ is a Hilbert $ A $-module.

\item
$ \Trip{X}{\pi}{U} $ is a covariant representation of $ \Trip{A}{G}{\alpha} $.

\item
$ \Trip{X}{\mu}{U} $ is a covariant representation of $ \Trip{\Co{G}}{G}{\lt} $.

\item
$ \pi $ and $ \mu $ commute.
\end{enumerate}
\end{defn}



\begin{lem} \label{Bijection between modular and covariant representations}
Let $ \Trip{A}{G}{\alpha} $ be a $ C^{\ast} $-dynamical system, and let $ \widehat{\alpha} $ denote the dual coaction of $ G $ on $ A \rtimes_{\alpha} G $. Then
$$
\Quad{X}{\pi}{U}{\mu} \mapsto \Trip{X}{\pi \times U}{\mu}
$$
gives a bijection from the Heisenberg $ \Trip{A}{G}{\alpha} $-modular representations to the covariant representations of $ \Trip{A \rtimes_{\alpha} G}{G}{\widehat{\alpha}} $. Consequently,
$$
\Quad{X}{\pi}{U}{\mu} \mapsto \Pair{X}{\Br{\pi \times U} \times \mu}
$$
gives a bijection from the Heisenberg $ \Trip{A}{G}{\alpha} $-modular representations to the nondegenerate representations of $ \Br{A \rtimes_{\alpha} G} \rtimes_{\widehat{\alpha}} G $.
\end{lem}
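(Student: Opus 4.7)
The plan is to verify the bijection by exploiting the universal property of the crossed product $ A \rtimes_{\alpha} G $ and checking the coaction-covariance identity for $ \widehat{\alpha} $ on the canonical generators $ \func{i_{A}}{A} $ and $ \func{i_{G}}{G} $. Starting from a Heisenberg $ \Trip{A}{G}{\alpha} $-modular representation $ \Quad{X}{\pi}{U}{\mu} $, the universal property of $ A \rtimes_{\alpha} G $ supplies a unique nondegenerate integrated form $ \pi \times U : A \rtimes_{\alpha} G \to \func{\LL}{X} $. The substantive step is to verify that $ \Trip{X}{\pi \times U}{\mu} $ satisfies the covariance condition for $ \widehat{\alpha} $. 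Since $ \widehat{\alpha} $ is determined on generators by $ \func{\widehat{\alpha}}{\func{i_{A}}{a}} = \func{i_{A}}{a} \xt 1 $ and $ \func{\widehat{\alpha}}{\func{i_{G}}{x}} = \func{i_{G}}{x} \xt x $, and both sides of the coaction-covariance identity are homomorphisms determined by their values on a generating set, it suffices to check the identity on these two families. On $ \func{i_{A}}{a} $ the identity reduces to the statement that $ \func{\pi}{a} = \Func{\pi \times U}{\func{i_{A}}{a}} $ commutes with each $ \func{\mu}{f} $ for $ f \in \Co{G} $, which is hypothesis (4) of Definition \ref{Heisenberg modular representation}; on $ \func{i_{G}}{x} $, unwinding the conjugation by $ \Func{\mu \xt \Id}{w_{G}} $ yields the relation $ U_{x} \func{\mu}{f} U_{x}^{-1} = \func{\mu}{\func{\lt_{x}}{f}} $, which is hypothesis (3).

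For the inverse direction, given a covariant representation $ \Trip{X}{\rho}{\mu} $ of $ \Trip{A \rtimes_{\alpha} G}{G}{\widehat{\alpha}} $, define $ \pi \df \bar{\rho} \circ i_{A} $ and $ U \df \bar{\rho} \circ i_{G} $, where $ \bar{\rho} $ is the strict extension of $ \rho $ to the multiplier algebra $ \func{M}{A \rtimes_{\alpha} G} $. Then $ \pi $ is a nondegenerate representation of $ A $, $ U $ is a strictly continuous unitary representation of $ G $, $ \Trip{X}{\pi}{U} $ is covariant for $ \Trip{A}{G}{\alpha} $, and $ \pi \times U = \rho $ by the universal property. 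Evaluating the coaction-covariance of $ \Trip{X}{\rho}{\mu} $ on $ \func{i_{A}}{a} $ and $ \func{i_{G}}{x} $ exactly as in the previous paragraph recovers hypotheses (4) and (3), so $ \Quad{X}{\pi}{U}{\mu} $ is a Heisenberg $ \Trip{A}{G}{\alpha} $-modular representation. The two constructions are visibly mutually inverse, establishing the first bijection; the second bijection is then formal, since covariant representations of the coaction $ \Trip{A \rtimes_{\alpha} G}{G}{\widehat{\alpha}} $ are, by the universal property of the coaction crossed product recalled in \secref{prelim}, in natural bijection via their integrated forms with nondegenerate representations of $ \Br{A \rtimes_{\alpha} G} \rtimes_{\widehat{\alpha}} G $, and composing with the first bijection gives exactly $ \Quad{X}{\pi}{U}{\mu} \mapsto \Pair{X}{\Br{\pi \times U} \times \mu} $.

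The main obstacle is the bookkeeping around the multiplier-algebra element $ \Func{\mu \xt \Id}{w_{G}} \in \func{M}{\func{\LL}{X} \xt \Cstar{G}} $ and the precise form of the coaction-covariance condition. The cleanest way to handle this, and the one I would use, is to invoke Raeburn's observation (already recorded in \secref{prelim}) that $ \Trip{X}{U}{\mu} $ is a covariant representation of $ \Trip{\Cstar{G}}{G}{\delta_{G}} $ if and only if $ \Trip{X}{\mu}{U} $ is a covariant representation of $ \Trip{\Co{G}}{G}{\lt} $; combined with the commutation of $ \pi $ and $ \mu $, this reduces the verification for $ A \rtimes_{\alpha} G $ to the two generator computations above.
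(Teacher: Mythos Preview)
Your proposal is correct and follows essentially the same approach as the paper: the paper simply cites \cite{enchilada}*{Proposition~A.63} together with \cite{raeburnrep}*{Examples~2.9} for the equivalence between Heisenberg $\Trip{A}{G}{\alpha}$-modular representations and covariant representations of $\Trip{A\rtimes_\alpha G}{G}{\widehat{\alpha}}$, and then invokes the universal property of the coaction crossed product for the second bijection. Your argument unpacks that citation by checking covariance on the generators $\func{i_A}{a}$ and $\func{i_G}{x}$ and appealing to the same Raeburn equivalence between $\Trip{\Co{G}}{G}{\lt}$-covariance and $\Trip{\Cstar{G}}{G}{\delta_G}$-covariance already recorded in \secref{prelim}, so the two proofs coincide in substance.
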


\begin{proof}
By \cite{enchilada}*{Proposition A.63} and the discussion preceding it (see also \cite{raeburnrep}*{Examples 2.9}), $ \Quad{X}{\pi}{U}{\mu} $ is a Heisenberg $ \Trip{A}{G}{\alpha} $-modular representation if and only if $ \Trip{X}{\pi \times U}{\mu} $ is a covariant representation of $ \Trip{A \rtimes_{\alpha} G}{G}{\widehat{\alpha}} $. Since the latter is equivalent to
$ \Pair{X}{\Br{\pi \times U} \times \mu} $ being a nondegenerate representation of $ \Br{A \rtimes_{\alpha} G} \rtimes_{\widehat{\alpha}} G $ by definition, we are done.
\end{proof}



\begin{defn}
A Heisenberg $ \Trip{A}{G}{\alpha} $-modular representation $ \Quad{X}{\pi}{U}{\mu} $ is said to be \emph{equivalent} to another one $ \Quad{X'}{\pi'}{U'}{\mu'} $ if and only if there is a unitary map $ W: X \to X' $ such that for all $ a \in A $, $ x \in G $, and $ f \in \Co{G} $,
$$
W \func{\pi}{a} W^{- 1} = \func{\pi'}{a}, \quad
W U_{x}         W^{- 1} = U'_{x},         \quad \text{and} \quad
W \func{\mu}{f} W^{- 1} = \func{\mu'}{f},
$$
in which case we write $ \Quad{X}{\pi}{U}{\mu} \simeq \Quad{X'}{\pi'}{U'}{\mu'} $.
\end{defn}


Note that $ \simeq $ is an equivalence relation on the class of all Heisenberg $ \Trip{A}{G}{\alpha} $-modular representations. Heisenberg modular representations are equivalent if and only if the associated representations of the double crossed product are equivalent.



\begin{defn}
Let $ \Trip{A}{G}{\alpha} $ be a $ C^{\ast} $-dynamical system. Then the \emph{Schr\"{o}dinger $ \Trip{A}{G}{\alpha} $-modular representation} is the quadruple
$$
\AltQuad{A \otimes \L{G}}{\Br{\Id \otimes M} \circ \widetilde{\alpha}}{1 \otimes \lambda}{1 \otimes M}.
$$
\textbf{Note:} If $ G $ is abelian, then $ M = \mu_{\Mod} $.
\end{defn}



\begin{prop} \label{Schroedinger modular representations are Heisenberg modular representations}
Let $ \Trip{A}{G}{\alpha} $ be a $ C^{\ast} $-dynamical system. Then the following statements are true:
\begin{enumerate}[label=\textup{(\arabic*)}]
\item
The Schr\"{o}dinger $ \Trip{A}{G}{\alpha} $-modular representation is a Heisenberg $ \Trip{A}{G}{\alpha} $-modular representation.

\item
Let $ I $ be a set and $ \Quad{X_{i}}{\pi_{i}}{U_{i}}{\mu_{i}}_{i \in I} $ an $ I $-indexed family of Heisenberg $ \Trip{A}{G}{\alpha} $-modular representations. Then the direct sum
$$
\bigoplus_{i \in I} \Quad{X_{i}}{\pi_{i}}{U_{i}}{\mu_{i}} \df \Quad{\bigoplus_{i \in I} X_{i}}{\bigoplus_{i \in I} \pi_{i}}{\bigoplus_{i \in I} U_{i}}{\bigoplus_{i \in I} \mu_{i}}
$$
is also a Heisenberg $ \Trip{A}{G}{\alpha} $-modular representation.
\end{enumerate}
\end{prop}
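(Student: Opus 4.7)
For part (1), the plan is to verify the four conditions of \lemref{Heisenberg modular representation} in turn. The first condition is immediate from the external tensor product construction recalled in \secref{prelim}: $A \otimes \L{G}$ is a Hilbert $A$-module. The second condition is just the statement, recorded in \secref{prelim}, that $(A \otimes \L{G}, (\Id \otimes M) \circ \widetilde{\alpha}, 1 \otimes \lambda)$ is the \emph{regular covariant representation} of $\Trip{A}{G}{\alpha}$, so there is nothing to prove. The third condition reduces, by tensoring the identity representation of $A$ onto each factor, to the classical fact that $\Trip{\L{G}}{M}{\lambda}$ is a covariant representation of $\Trip{\Co{G}}{G}{\lt}$: indeed $\lambda_{x} M_{f} \lambda_{x}^{- 1} = M_{\lt_{x}(f)}$ holds on $\L{G}$ by a direct change-of-variable computation, and slotting $1 \otimes (\cdot)$ on $A$ preserves this identity.

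For the fourth condition, the commutation of $\pi \df (\Id \otimes M) \circ \widetilde{\alpha}$ with $\mu \df 1 \otimes M$, the cleanest approach is to identify $A \otimes \L{G}$ (as a Hilbert $A$-module) with a completion of $\Cc{G,A}$, so that $\func{\pi}{a}$ becomes pointwise (left) multiplication by the $A$-valued function $x \mapsto \func{\alpha_{x^{- 1}}}{a}$, while $\func{\mu}{f}$ is pointwise (scalar) multiplication by $f$. Since scalar multiplication by $\func{f}{x}$ and left multiplication by $\func{\alpha_{x^{- 1}}}{a}$ commute inside $A$ for each $x$, the two operators commute. (Alternatively, one can observe that $1 \otimes M$ commutes with everything of the form $\Id \otimes (\text{multiplication operator})$ on the second factor; the formula defining $\widetilde{\alpha}$ shows that $\func{\widetilde{\alpha}}{a}$ pushes forward under $\Id \otimes M$ to such an operator.)

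For part (2), the verification is routine. Each of $\pi_{i}, U_{i}, \mu_{i}$ acts diagonally on $\bigoplus_{i \in I} X_{i}$, and the covariance identities $\pi_{i} \circ \alpha_{x} = \ad U_{i,x} \circ \pi_{i}$ and $\mu_{i} \circ \lt_{x} = \ad U_{i,x} \circ \mu_{i}$, together with the commutation $\func{\pi_{i}}{a} \func{\mu_{i}}{f} = \func{\mu_{i}}{f} \func{\pi_{i}}{a}$, hold coordinate-wise and therefore pass to the direct sum. Nondegeneracy of the sum representations of $A$ and $\Co{G}$ follows because $A X_{i} = X_{i}$ and $\Co{G} X_{i} = X_{i}$ for every $i$.

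The only step that requires any care is the commutation of $\pi$ with $\mu$ in the Schrödinger representation, since the twisting map $\widetilde{\alpha}$ obscures the picture at first; the computation becomes transparent after moving to the $\Cc{G,A}$ model, which is also the viewpoint that makes the subsequent identification with crossed-product machinery in \lemref{Bijection between modular and covariant representations} straightforward.
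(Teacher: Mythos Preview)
Your proof is correct and follows essentially the same approach as the paper's: both verify the four defining conditions, invoking the regular covariant representation for (2), the standard covariance of $\Trip{\L{G}}{M}{\lambda}$ for (3), and dismissing (2) of the proposition as routine. The only difference is cosmetic: for the commutation in condition (4) the paper simply observes that $\Br{\Id \otimes M} \circ \widetilde{\alpha}$ commutes with $1 \otimes M$ ``as $\Co{G}$ is commutative'' (i.e., $1 \otimes f$ is central in $\func{M}{A \otimes \Co{G}}$), which is exactly your parenthetical alternative, whereas your primary argument unpacks the same fact in the $\Cc{G,A}$ picture.
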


\begin{proof}
Statement (1) is true by virtue of the following facts:
\begin{itemize}
\item
The regular covariant representation of $ \Trip{A}{G}{\alpha} $ is precisely
$$
\AltTrip{A \otimes \L{G}}{\Br{\Id \otimes M} \circ \widetilde{\alpha}}{1 \otimes \lambda}.
$$

\item
$ \Trip{\L{G}}{M}{\lambda} $ is a covariant representation of $ \Trip{\Co{G}}{G}{\lt} $.

\item
$ \Br{\Id \otimes M} \circ \widetilde{\alpha} $ commutes with $ 1 \otimes M $ as $ \Co{G} $ is commutative.
\end{itemize}
Statement (2) follows routinely from the definitions.
\end{proof}


By Lemma \ref{Bijection between modular and covariant representations} and Proposition \ref{Schroedinger modular representations are Heisenberg modular representations}, the direct sum of Heisenberg modular representations is associated to the direct sum of their associated representations of the double crossed product. In particular, a Heisenberg modular representation $ \Quad{X}{\pi}{U}{\mu} $ is a multiple of another one $ \Quad{X'}{\pi'}{U'}{\mu'} $ if and only if the associated representation $ \Br{\pi \times U} \times \mu $ of the double crossed product is a multiple of $ \Br{\pi' \times U'} \times \mu' $.



\begin{prop}[Covariant Stone-von Neumann Theorem for Actions]
Let $ \Trip{A}{G}{\alpha} $ be a $ C^{\ast} $-dynamical system with $ A $ an elementary $ C^{\ast} $-algebra. Then every Heisenberg $ \Trip{A}{G}{\alpha} $-modular representation is a multiple of the Schr\"{o}dinger $ \Trip{A}{G}{\alpha} $-modular representation.
\end{prop}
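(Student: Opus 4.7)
The strategy is to push the classification through the bijection of \lemref{Bijection between modular and covariant representations} and then invoke the Abstract Modular Stone-von Neumann Theorem. Explicitly, under that bijection a Heisenberg $ \Trip{A}{G}{\alpha} $-modular representation $ \Quad{X}{\pi}{U}{\mu} $ corresponds to a nondegenerate representation $ \Br{\pi \times U} \times \mu $ of the double crossed product $ \Br{A \rtimes_{\alpha} G} \rtimes_{\widehat{\alpha}} G $ on $ X $. Equivalently, $ X $ becomes a $ \Br{\Br{A \rtimes_{\alpha} G} \rtimes_{\widehat{\alpha}} G} $-$ A $ correspondence. Moreover, by the paragraph following \propref{Schroedinger modular representations are Heisenberg modular representations}, the notion of ``multiple'' is preserved by this bijection. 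So it suffices to show that the particular correspondence arising from the Schr\"odinger representation is in fact a $ \Br{\Br{A \rtimes_{\alpha} G} \rtimes_{\widehat{\alpha}} G} $-$ A $ imprimitivity bimodule: once this is established, \propref{The Abstract Modular Stone-von Neumann Theorem} (which applies since $ A $ is elementary) immediately yields that every such correspondence is a multiple of it, and transporting back through the bijection completes the proof.

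The first step is therefore to compute the correspondence attached to the Schr\"odinger representation. Its underlying Hilbert $ A $-module is $ A \xt \L{G} $, with the left actions of $ A \rtimes_{\alpha} G $ and $ \Co{G} $ given respectively by the integrated form of the regular covariant representation $ \AltTrip{A \xt \L{G}}{\Br{\Id \xt M} \circ \widetilde{\alpha}}{1 \xt \lambda} $ and by $ 1 \xt M $. Combining these into a single representation of the double crossed product, one recognizes the image as being dense in $ \func{\KK}{A \xt \L{G}} $: this is exactly the content of Imai-Takai (Katayama) duality, which identifies $ \Br{A \rtimes_{\alpha} G} \rtimes_{\widehat{\alpha}} G $ with $ A \xt \func{\KK}{\L{G}} \cong \func{\KK}{A \xt \L{G}} $ via precisely this representation. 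Hence the Schr\"odinger correspondence is an imprimitivity bimodule, as desired.

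With this in hand, the final step is routine. Given an arbitrary Heisenberg $ \Trip{A}{G}{\alpha} $-modular representation $ \Quad{X}{\pi}{U}{\mu} $, its associated $ \Br{\Br{A \rtimes_{\alpha} G} \rtimes_{\widehat{\alpha}} G} $-$ A $ correspondence is, by \propref{The Abstract Modular Stone-von Neumann Theorem}, a multiple of the Schr\"odinger correspondence. Under the bijection of \lemref{Bijection between modular and covariant representations}, and in view of the compatibility of direct sums with that bijection noted after \propref{Schroedinger modular representations are Heisenberg modular representations}, this shows $ \Quad{X}{\pi}{U}{\mu} $ is a multiple of the Schr\"odinger $ \Trip{A}{G}{\alpha} $-modular representation.

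The main obstacle is the second step, namely the verification that the Schr\"odinger correspondence really is an imprimitivity bimodule; everything else is bookkeeping through \lemref{Bijection between modular and covariant representations}. In principle this verification is classical crossed-product duality, but one must be careful to state it in the correct categorical form --- that the \emph{representation} given by integrating $ \Br{\Id \xt M} \circ \widetilde{\alpha} $, $ 1 \xt \lambda $, and $ 1 \xt M $ actually delivers an isomorphism onto $ \func{\KK}{A \xt \L{G}} $ --- rather than merely as an abstract isomorphism of $ C^{\ast} $-algebras. Once this identification is made explicit, the Abstract Modular Stone-von Neumann Theorem does all of the remaining work, with no abelian hypothesis on $ G $.
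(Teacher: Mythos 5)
Your proposal is correct and follows essentially the same route as the paper: identify Heisenberg modular representations with nondegenerate representations of $ \Br{A \rtimes_{\alpha} G} \rtimes_{\widehat{\alpha}} G $ via \lemref{Bijection between modular and covariant representations}, use Imai--Takai duality to realize $ A \otimes \L{G} $ as a $ \Br{\Br{A \rtimes_{\alpha} G} \rtimes_{\widehat{\alpha}} G} $-$ A $ imprimitivity bimodule whose left action is the integrated Schr\"odinger representation, and then apply \propref{The Abstract Modular Stone-von Neumann Theorem} together with the compatibility of multiples under the bijection. (One small naming quibble: for actions the relevant duality is Imai--Takai; Katayama duality is the coaction analogue used in the next section.)
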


\begin{proof}
Let $ \Quad{X}{\pi}{U}{\mu} $ be a Heisenberg $ \Trip{A}{G}{\alpha} $-modular representation. Imai-Takai Duality says that the integrated form of the Schr\"{o}dinger $ \Trip{A}{G}{\alpha} $-modular representation
$$
\AltQuad{A \otimes \L{G}}{\Br{\Id \otimes M} \circ \widetilde{\alpha}}{1 \otimes \lambda}{1 \otimes M}
$$
is a $ C^{\ast} $-isomorphism from $ \Br{A \rtimes_{\alpha} G} \rtimes_{\widehat{\alpha}} G $ to $ A \otimes \func{\KK}{\L{G}} $. Using this $ C^{\ast} $-isomorphism and the identification
$$
\Altfunc{\KK}{A \otimes \L{G}} \cong A \otimes \Altfunc{\KK}{\L{G}},
$$
the $ \func{\KK}{A \otimes \L{G}} $-$ A $ imprimitivity bimodule $ A \otimes \L{G} $ becomes an $ \Br{\Br{A \rtimes_{\alpha} G} \rtimes_{\widehat{\alpha}} G} $-$ A $ imprimitivity bimodule, and $ \phi_{A \otimes \L{G}} $ becomes the integrated form of the Schr\"{o}dinger $ \Trip{A}{G}{\alpha} $-modular representation. The Covariant Stone-von Neumann Theorem for Actions then follows from the paragraph preceding this proposition.
\end{proof}

\section{The Covariant Stone-von Neumann Theorem for Coactions} \label{coactions} 

In this section, we apply the Abstract Modular Stone-von Neumann Theorem again to crossed-product duality, this time starting with a coaction rather than an action. The development is parallel to that in \secref{actions}, and we significantly streamline the presentation.

\begin{defn}
Given a coaction $ \Trip{A}{G}{\delta} $, a \emph{Heisenberg $ \Trip{A}{G}{\delta} $-modular representation} is a quadruple $ \Quad{X}{\pi}{\mu}{U} $ with the following properties:
\begin{itemize}
\item
$ \Trip{X}{\pi}{\mu} $ is a covariant representation of $ \Trip{A}{G}{\delta} $.

\item
$ \Trip{X}{\mu}{U} $ is a covariant representation of $ \Trip{\Co{G}}{G}{\rt} $.

\item
$ \pi $ and $ U $ commute.
\end{itemize}
\end{defn}

\begin{lem}
With the above notation, the assignments $ \Quad{X}{\pi}{\mu}{U} \mapsto \Trip{X}{\pi \times \mu}{U} $ and $ \Quad{X}{\pi}{\mu}{U} \mapsto \Pair{X}{\Br{\pi \times \mu} \times U} $ give bijections from the Heisenberg $ \Trip{A}{G}{\delta} $-modular representations to the covariant representations of $ \AltTrip{A \rtimes_{\delta} G}{G}{\widehat{\delta}} $ and the nondegenerate representations of $ \Br{A \rtimes_{\delta} G} \rtimes_{\widehat{\delta}} G $, respectively.
\end{lem}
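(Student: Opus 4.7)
The plan is to mirror the proof of Lemma \ref{Bijection between modular and covariant representations}, with the roles of action and coaction interchanged. Given a Heisenberg $\Trip{A}{G}{\delta}$-modular representation $\Quad{X}{\pi}{\mu}{U}$, the first bullet says that $\Pair{\pi}{\mu}$ is a covariant pair for the coaction $\delta$, so its integrated form $\pi \times \mu: A \rtimes_{\delta} G \to \func{\LL}{X}$ is a well-defined nondegenerate representation. To show the assignment lands in the target, I would verify that $\Trip{X}{\pi \times \mu}{U}$ is covariant for $\AltTrip{A \rtimes_{\delta} G}{G}{\widehat{\delta}}$. Since $A \rtimes_{\delta} G$ is generated by $\func{j_{A}}{A}$ and $\func{j_{G}}{\Co{G}}$, it suffices to test the covariance identity $\Br{\pi \times \mu} \circ \widehat{\delta}_{x} = \ad U_{x} \circ \Br{\pi \times \mu}$ against $j_{A}$ and against $j_{G}$. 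Using $\widehat{\delta}_{x} \circ j_{A} = j_{A}$, the first test reduces to $\func{\pi}{a} = U_{x} \func{\pi}{a} U_{x}^{- 1}$, which is precisely the third bullet (commutation of $\pi$ and $U$); using $\widehat{\delta}_{x} \circ j_{G} = j_{G} \circ \rt_{x}$, the second test reduces to $\func{\mu}{\func{\rt_{x}}{f}} = U_{x} \func{\mu}{f} U_{x}^{- 1}$, which is the covariance of $\Trip{X}{\mu}{U}$ for $\Trip{\Co{G}}{G}{\rt}$, i.e. the second bullet.

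For injectivity, observe that $\pi$, $\mu$, and $U$ can be recovered from $\Trip{X}{\pi \times \mu}{U}$ by restricting $\pi \times \mu$ along $j_{A}$ and $j_{G}$. For surjectivity, start with a covariant representation $\Trip{X}{\rho}{U}$ of $\AltTrip{A \rtimes_{\delta} G}{G}{\widehat{\delta}}$ and set $\pi \df \rho \circ j_{A}$, $\mu \df \rho \circ j_{G}$. Then $\Trip{X}{\pi}{\mu}$ is a covariant representation of $\Trip{A}{G}{\delta}$ because $\Trip{A \rtimes_{\delta} G}{j_{A}}{j_{G}}$ is one, and the universal property forces $\rho = \pi \times \mu$. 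Running the calculations of the previous paragraph backwards then shows the remaining two Heisenberg conditions hold, so $\Quad{X}{\pi}{\mu}{U}$ is sent to $\Trip{X}{\rho}{U}$.

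For the second bijection, I would invoke the universal property of $\Br{A \rtimes_{\delta} G} \rtimes_{\widehat{\delta}} G$: the covariant representations of $\AltTrip{A \rtimes_{\delta} G}{G}{\widehat{\delta}}$ correspond bijectively, via the integration map $\Trip{X}{\pi \times \mu}{U} \mapsto \Pair{X}{\Br{\pi \times \mu} \times U}$, to the nondegenerate representations of the iterated crossed product. Composing with the first bijection yields the second. The main point to watch is bookkeeping of the translation conventions — the dual action $\widehat{\delta}$ is defined via $\rt$, not $\lt$, and this is exactly why the second bullet of the definition involves $\Trip{\Co{G}}{G}{\rt}$ rather than $\Trip{\Co{G}}{G}{\lt}$; everything else is a direct application of Raeburn's characterization of covariance via $\Pair{j_{A}}{j_{G}}$ cited in \cite{enchilada}*{Proposition A.63} and \cite{raeburnrep}*{Examples 2.9}.
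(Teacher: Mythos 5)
Your proposal is correct and follows essentially the same route as the paper's own proof: check covariance of $\Trip{X}{\pi \times \mu}{U}$ on the generators $\func{j_{A}}{a}$ and $\func{j_{G}}{f}$ (where it reduces to the commutation of $\pi$ with $U$ and the $\rt$-covariance of $\Pair{\mu}{U}$, respectively), recover the quadruple from any covariant representation of the dual action via the universal covariant pair, and then pass to the double crossed product by its universal property. You merely spell out the generator computations that the paper's outline leaves to the reader.
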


It seems difficult to find the above statement in the literature, but it is an easy application of standard techniques, and for completeness, we include an outline of the proof below.

\begin{proof}
For the first part, it suffices to note that
\begin{itemize}
\item
if $ \Quad{X}{\pi}{\mu}{U} $ is a Heisenberg $ \Trip{A}{G}{\delta} $-module representation, then $ \Trip{X}{\pi \times \mu}{U} $ is a covariant representation of $ \AltTrip{A \rtimes_{\delta} G}{G}{\widehat{\delta}} $, and this can be quickly checked separately on the generators $ \func{\pi}{a} $ and $ \func{\mu}{f} $ for $ a \in A $ and $ f \in \Co{G} $, while, on the other hand,

\item
if $ \Trip{X}{\sigma}{U} $ is a covariant representation of $ \AltTrip{A \rtimes_{\delta} G}{G}{\widehat{\delta}} $, then there is a unique covariant representation $ \Trip{X}{\pi}{\mu} $ of $ \Trip{A}{G}{\delta} $ such that $ \sigma = \pi \times \mu $, and, moreover, covariance of $ \Trip{X}{\sigma}{U} $ applied to the generators $ \func{\pi}{a} $ and $ \func{\mu}{f} $ shows that $ \Trip{X}{\mu}{U} $ is covariant for $ \Trip{\Co{G}}{G}{\rt} $ and that $ \pi $ commutes with $ U $.
\end{itemize}

Then the second part follows immediately since, by definition, the covariant representations of the action $ \widehat{\delta} $ are in bijective correspondence with the nondegenerate representations of the crossed product by $ \widehat{\delta} $.
\end{proof}

\begin{defn}
The \emph{Schr\"{o}dinger $ \Trip{A}{G}{\delta} $-modular representation} is
$$
\AltQuad{A \xt \L{G}}{\Br{\Id \xt \lambda} \circ \delta}{1 \xt M}{1 \xt \rho},
$$
where $ \rho $ is the right regular representation of $ G $.
\end{defn}

\begin{lem}
The Schr\"{o}dinger $ \Trip{A}{G}{\delta} $-modular representation is a Heisenberg modular representation.
\end{lem}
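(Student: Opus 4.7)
The plan is to verify each of the three defining clauses of a Heisenberg $\Trip{A}{G}{\delta}$-modular representation directly from the definitions and from facts already recorded in \secref{prelim}.

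First I would dispatch covariance of $ \Trip{A \xt \L{G}}{\Br{\Id \xt \lambda} \circ \delta}{1 \xt M} $ for $ \Trip{A}{G}{\delta} $ simply by observing that this triple is, by definition, the regular covariant representation of $ \Trip{A}{G}{\delta} $ introduced in the preliminaries. Nothing more is needed for clause (1).

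For clause (2), I need $ \AltTrip{A \xt \L{G}}{1 \xt M}{1 \xt \rho} $ to be a covariant representation of $ \Trip{\Co{G}}{G}{\rt} $. Since the $ A $-factor is inert, this reduces to showing that $ \Trip{\L{G}}{M}{\rho} $ is covariant for $ \Trip{\Co{G}}{G}{\rt} $, i.e., $ \rho_{x} \func{M}{f} \rho_{x}^{- 1} = \func{M}{\func{\rt_{x}}{f}} $ for $ f \in \Co{G} $ and $ x \in G $. This is a standard one-line computation on $ \L{G} $: both operators act by pointwise multiplication by $ y \mapsto \func{f}{y x} $.

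The last clause demands that $ \Br{\Id \xt \lambda} \circ \delta $ commutes with $ 1 \xt \rho $. The key fact is that the left and right regular representations of $ G $ commute, so $ 1 \xt \rho_{x} $ commutes with every operator of the form $ a \xt \func{\lambda}{s} $ (the $ A $-leg is unaffected, and the Hilbert-space leg commutes because $ \func{\lambda}{s} \rho_{x} = \rho_{x} \func{\lambda}{s} $). By continuity and the nondegeneracy axioms for the coaction, this extends to show that $ 1 \xt \rho_{x} $ commutes with $ \FUNC{\Id \xt \lambda}{\func{\delta}{a}} $ for every $ a \in A $; concretely, multiply on the right by $ 1 \xt \func{\lambda}{s} $ (using $ \clspn \{ \func{\delta}{A} \Br{1 \xt \Cstar{G}} \} = A \xt \Cstar{G} $ and that $ \lambda $ is the integrated form of a representation extending to multipliers) and invoke the commutation of $ \lambda $ and $ \rho $ at the level of group elements. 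I do not expect any genuine obstacle here, only a density argument to lift the pointwise commutation from $ G $ to $ \Br{\Id \xt \lambda} \circ \delta \Br{A} $; this is the only step requiring care, and it is routine once one writes out the leg-factored form of $ \func{\delta}{a} $.
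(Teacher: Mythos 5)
Your proposal is correct and follows essentially the same three-step verification as the paper's proof: identifying the first triple as the regular covariant representation, checking covariance of $\Trip{\L{G}}{M}{\rho}$ for $\Trip{\Co{G}}{G}{\rt}$, and deducing the commutation of $\Br{\Id \xt \lambda} \circ \delta$ with $1 \xt \rho$ from the commutation of $\lambda$ and $\rho$. The paper simply records these three facts as bullet points, whereas you supply the routine computations and the multiplier/density argument behind them.
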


\begin{proof}
This is true because of the following facts:
\begin{itemize}
\item
$ \Trip{A \xt \L{G}}{\Br{\Id \xt \lambda} \circ\delta}{1 \xt M} $ is the regular covariant representation of $ \Trip{A}{G}{\delta} $.

\item
$ \Trip{A \xt \L{G}}{M}{\rho} $ is a covariant representation of $ \Trip{\Co{G}}{G}{\rt} $.

\item
$ \Br{\Id \xt \lambda} \circ \delta $ commutes with $ 1 \xt \rho $ since $ \lambda $ and $ \rho $ commute. \qedhere
\end{itemize}
\end{proof}

Just as we did for actions, we say that a Heisenberg modular representation is a multiple of another one if and only if it is equivalent to a direct sum of copies of it.

\begin{thm}[Covariant Stone-von Neumann Theorem for Coactions]
If $ \delta $ is a maximal coaction of $ G $ on an elementary $ C^{\ast} $-algebra $ A $, then every Heisenberg $ \Trip{A}{G}{\delta} $-modular representation is a multiple of the Schr\"{o}dinger $ \Trip{A}{G}{\delta} $-modular representation.
\end{thm}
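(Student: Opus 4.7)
The strategy mirrors the proof of the Covariant Stone-von Neumann Theorem for Actions: we translate the modular-representation theory into a representation theory of a double crossed product, identify that double crossed product with an elementary $C^{\ast}$-algebra tensored with $A$, and then invoke the Abstract Modular Stone-von Neumann Theorem on an appropriate imprimitivity bimodule. The only genuinely new ingredient is the use of maximality of $\delta$ in place of the automatic Imai-Takai duality used on the action side.

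The first step is to recognize that, by the preceding lemma, Heisenberg $\Trip{A}{G}{\delta}$-modular representations correspond bijectively to nondegenerate representations of the double crossed product $\Br{A \rtimes_{\delta} G} \rtimes_{\widehat{\delta}} G$, and that this bijection respects direct sums, so that multiples on one side correspond to multiples on the other. Thus it suffices to show that every nondegenerate representation of $\Br{A \rtimes_{\delta} G} \rtimes_{\widehat{\delta}} G$ is a multiple of the integrated form of the Schr\"{o}dinger modular representation.

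The second step is to invoke duality for maximal coactions (the ``maximal'' version of Imai-Takai/Katayama duality, see e.g.\ \cite{enchilada}). Because $\delta$ is assumed maximal, the canonical map from $\Br{A \rtimes_{\delta} G} \rtimes_{\widehat{\delta}} G$ to $A \xt \func{\KK}{\L{G}}$ is an isomorphism of $C^{\ast}$-algebras. I would then verify that this canonical isomorphism is precisely the integrated form $\Br{\Br{\Br{\Id \xt \lambda} \circ \delta} \times \Br{1 \xt M}} \times \Br{1 \xt \rho}$ of the Schr\"{o}dinger $ \Trip{A}{G}{\delta} $-modular representation; this is a generator-by-generator calculation on $ \func{j_{A}}{a} $, $ \func{j_{G}}{f} $, and the dual-action unitaries, using only the definitions of $\widehat{\delta}$, $\lambda$, $M$, and $\rho$.

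Once this identification is in hand, the third step is to transport the canonical $ \func{\KK}{A \xt \L{G}} $-$ A $ imprimitivity bimodule structure on $ A \xt \L{G} $, together with the identification $ \func{\KK}{A \xt \L{G}} \cong A \xt \func{\KK}{\L{G}} $, along the above isomorphism to obtain a $ \Br{\Br{A \rtimes_{\delta} G} \rtimes_{\widehat{\delta}} G} $-$ A $ imprimitivity bimodule whose left action is the Schr\"{o}dinger integrated form. Since $A$ is elementary, \propref{The Abstract Modular Stone-von Neumann Theorem} applies and gives that every $ \Br{\Br{A \rtimes_{\delta} G} \rtimes_{\widehat{\delta}} G} $-$ A $ correspondence (equivalently, every nondegenerate representation of the double crossed product on a Hilbert $A$-module) is a multiple of $ A \xt \L{G} $, which translates back through the bijection into the desired conclusion. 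The main obstacle is the explicit identification of the maximal-duality isomorphism with the Schr\"{o}dinger integrated form; once that is settled, everything else runs parallel to the action case.
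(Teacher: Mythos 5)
Your proposal is correct and follows essentially the same route as the paper: reduce via the bijection lemma to representations of $ \Br{A \rtimes_{\delta} G} \rtimes_{\widehat{\delta}} G $, use maximality to identify the double crossed product with $ A \xt \func{\KK}{\L{G}} $ via the Schr\"{o}dinger integrated form, and apply \propref{The Abstract Modular Stone-von Neumann Theorem} to the resulting imprimitivity bimodule. The one step you flag as the ``main obstacle'' --- checking that the canonical duality isomorphism is the Schr\"{o}dinger integrated form --- is, as the paper's closing remark observes, essentially the \emph{definition} of maximality (\cite{maximal}*{Definition 3.1}), so no generator-by-generator computation is actually required.
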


\begin{proof}
The Katayama Duality Theorem says that if the coaction $ \delta $ is maximal, then the integrated form of the Schr\"{o}dinger modular representation is a $ C^{\ast} $-isomorphism of $ \Br{A \rtimes_{\delta} G} \rtimes_{\widehat{\delta}} G $ onto $ A \xt \KK $. Using this $ C^{\ast} $-isomorphism, the $ \Br{A \xt \KK} $-$ A $ imprimitivity bimodule $ A \xt \L{G} $ becomes an $ \Br{\Br{A \rtimes_\delta G} \rtimes_{\widehat{\delta}} G} $-$ A $ imprimitivity bimodule, and $ \phi_{A \xt \L{G}} $ is the integrated form of the Schr\"{o}dinger $ \Trip{A}{G}{\delta} $-modular representation. Now the result follows from \propref{The Abstract Modular Stone-von Neumann Theorem}.
\end{proof}

\begin{rem}
In its original formulation, the Katayama Duality Theorem \cite{katayama}*{Theorem 8} (see also \cite{maximal}*{Proposition 2.2}) involved the reduced crossed product by the dual action. The version we quoted is really more of a definition than a theorem (see \cite{maximal}*{Definition 3.1}).
\end{rem}


\end{document}